\documentclass[11pt]{amsart}

\usepackage[margin=1in]{geometry}
\usepackage{amssymb,amsmath,enumerate,tikz}
\usepackage{graphicx}

\numberwithin{equation}{section}
\newtheorem{theorem}[equation]{Theorem}
\newtheorem{lemma}[equation]{Lemma}
\newtheorem{proposition}[equation]{Proposition}

\newtheorem{conjecture}[equation]{Conjecture}

\theoremstyle{definition}

\newtheorem*{remark}{Remark}

\def\E{\mathsf{E}}
\def\N{\mathsf{N}}
\def\P{\mathcal{P}}
\def\R{\mathcal{R}}
\def\Z{\mathbb{Z}}
\def\Grid{\mathcal{G}}
\def\vsp{\rule[-1ex]{0ex}{3.5ex}}

\colorlet{sMax}{yellow!85!orange}
\colorlet{2Max}{green!60!black}

\title{Points of maximal traffic on a grid with obstruction}
\author[Gil]{Juan Gil}
\address{Penn State Altoona\\ 3000 Ivyside Park\\ Altoona, PA 16601}
\author[Liang]{Zhenni Liang}
\address{University of North Texas}
\author[Odetola]{Ayodeji Odetola}
\author[Weiner]{Michael Weiner}
\address{Penn State Altoona\\ 3000 Ivyside Park\\ Altoona, PA 16601}

\begin{document}
\begin{abstract}
For $n\in\mathbb{N}$, we consider the set of lattice paths from $(0,0)$ to $(n,n)$ using only unit north and east steps. Given a point $B$ to be avoided, we ask: at which point $A$ on the grid with corners $(0,0)$ and $(n,n)$, different from the endpoints, does the largest number of $B$-avoiding lattice paths pass through? We show that for $n\ge 9$, regardless of the location of $B$, the maximum is attained at one of ten specific points clustered near the two endpoints of the grid. This stability, however, conceals an interesting anomaly. When the obstruction $B$ lies on the antidiagonal $x+y=n$, the points of maximal traffic migrate from the near-corner points $(1,1)$ and $(n-1,n-1)$ to boundary points in the set of possible maximizers. The migration occurs for every $8\le n\le 375$, and intermittently up to $n=495$. We conjecture that the anomaly disappears for $n\ge 496$.
\end{abstract}

\maketitle

\section{Introduction}

In~\cite{K2017}, Kaplan posed the following question. Suppose everyone in town lives at $(0,0)$ and works at $(n,n)$, and walks to work along a uniformly random monotone lattice path (using only unit north and east steps). Where should Nathan open his restaurant to maximize the chance that a commuter will pass by? Kaplan showed that, for $n\ge 2$, the answer is $(1,1)$ and $(n-1,n-1)$, with $(1,0)$, $(0,1)$, $(n-1,n)$, and $(n,n-1)$ giving the next-best options. 

We essentially pick up where Kaplan left off. The restaurant location problem assumes a clean grid; in practice, however, parts of the city are often inaccessible. Suppose a road closure or construction site occupies some lattice point $B=(c,d)$ with $0\le c,d\le n$ and $0<c+d<2n$, forcing commuters to detour around it. Where should a food truck park to catch the most foot traffic? More precisely, among the lattice paths from $(0,0)$ to $(n,n)$ that do \emph{not} pass through $B$, which point $A$ carries the maximum traffic? For example, when $n=3$ and $B=(1,1)$, there are seven different points on the grid where the maximum happens. On the other hand, when $n=4$ and $B=(1,2)$, there is a unique maximum at the point $(2,1)$, see Table~\ref{tab:n=3&4}.

While small cases reveal a sensitivity to the location of $B$, the answer stabilizes for large values of $n$. In this paper, we prove that for $n\ge 9$, the maximum is always attained at one of the following ten points: 
\begin{gather*}
 (1,0),\ (0,1),\ (1,1),\ (2,1),\ (1,2), \\[2pt]
 (n-1,n),\ (n,n-1),\ (n-1,n-1),\ (n-1,n-2),\ (n-2,n-1),
\end{gather*}
regardless of where the forbidden point $B$ is located. Moreover, if the obstruction is in the set $\{(1,2), (2,1)\}$, then the point of maximum traffic is the other point of that set. By symmetry, an analogous statement holds for the set $\{(n-1,n-2), (n-2,n-1)\}$. For every other location of the obstruction, the maximum is attained at one of the remaining six points. The bound $n\ge 9$ is sharp: for $n=8$, there are four more locations where the maximum can occur.

In other words, for $n\ge 9$, the points in $\mathcal{S}=\{(1,2),(2,1),(n-1,n-2),(n-2,n-1)\}$ are the only obstructions for which the maximum leaves the six corner-adjacent points.

Another surprising behavior worth emphasizing occurs when the obstruction sits on the antidiagonal $x+y=n$. There $B$ is equidistant from the two endpoints of the grid, and the competition among the six corner-adjacent candidates degenerates: the reflection about the antidiagonal fixes $B$, forcing the exact ties $f_B(1,1)=f_B(n-1,n-1)$, $f_B(1,0)=f_B(n,n-1)$, and $f_B(0,1)=f_B(n-1,n)$. Locating the maximum therefore reduces to a single comparison between $f_B(1,1)$ and one boundary point (namely $f_B(1,0)$ when $B$ lies above the main diagonal, and $f_B(0,1)$ when it lies below). On the grid without obstructions, this comparison is decided by a tiny margin: $f(1,1)-f(1,0)$ is only a $\frac{1}{2n-1}$ fraction of $f(1,0)$. However, an obstruction on the antidiagonal penalizes $(1,1)$ more heavily than $(1,0)$, and for suitably placed $B$, the differential penalty can exceed that vanishing margin, moving the maximum to a pair of boundary points in set of eligible maximizers. In Section~\ref{sec:antidiagonal} we make this precise, showing that the migration occurs exactly when an explicit ratio of binomial quantities exceeds 1. Evaluating that criterion reveals an irregular pattern: the migration happens for every $8\le n\le 375$, then intermittently up to $n=495$, and then seems to stop. We conjecture that the anomaly never reappears for $n\ge 496$.

For $n\in\mathbb N$, we let $\Grid(n)\subset \Z^2$ denote the $(n\times n)$-grid with corners at $(0,0)$ and $(n,n)$, and we let $\R_0$, $\R_1$, and $\R_2$ be the regions of $\Grid(n)$ defined by
\begin{align*}
 \R_0(n) &= \{(x,y)\in\Grid(n): x=y, \text{ and } 0<x<n\}, \\
 \R_1(n) &= \{(x,y)\in\Grid(n): |x-y| = 1\}, \\
 \R_2(n) &= \{(x,y)\in\Grid(n): |x-y|\ge 2\}.
\end{align*}
Clearly, $\Grid(n)\backslash\{(0,0),(n,n)\} = \R_0(n)\cup \R_1(n)\cup \R_2(n)$, see Figure~\ref{fig:zoning}. The proofs of our results rely on inequalities obtained according to the position of the obstruction $B$ relative to these regions. The arguments combine explicit binomial estimates with injective maps between path classes. 

\tikzstyle{zone}=[rounded corners, line width=5, line cap=round, opacity=0.12]

\begin{figure}[ht]
\begin{tikzpicture}[scale=0.7]
\draw[very thin] node[below=4, left=1]  {\scriptsize $(0,0)$} (0,0) -- (8,8)  node[above=4, right=2]  {\scriptsize $(n,n)$};
\foreach \i in {0,8}{\draw[fill=gray!10] (\i,\i) circle (0.12);}
\draw[zone, orange] (0.5,0.5) -- (7.5,7.5);
\draw[very thin] (1,0) -- (8,7) node[right=2] {\scriptsize $y=x-1$};
\draw[zone, cyan] (1,0) -- (8,7);
\draw[very thin] (0,1) -- (7,8) node[above=2] {\scriptsize $y=x+1$};
\draw[zone, cyan] (0,1) -- (7,8);
\draw[very thin, fill=green!50!black, opacity=0.12] (0,2) -- (6,8) -- (0,8) -- cycle;
\draw[very thin, fill=green!50!black, opacity=0.12] (2,0) -- (8,6) -- (8,0) -- cycle;
\foreach \x/\y in {0/1,1/0,1/1,1/2,2/1} { \draw[fill=orange!80!black] (\x/2,\y/2) circle (0.1);}
\foreach \x/\y in {7/8,8/7,7/7,6/7,7/6} { \draw[fill=orange!80!black] (4+\x/2,4+\y/2) circle (0.1);}
\node at (3.7,3.7) {\small $\R_0$};
\node at (4,5) {\small $\R_1$};
\node at (5,4) {\small $\R_1$};
\node at (2,6.5) {\small $\R_2$};
\node at (6.5,2) {\small $\R_2$};
\end{tikzpicture}
\caption{Zone partition and the 10 possible locations of the maximum.}
\label{fig:zoning}
\end{figure}

The paper is structured as follows. We start with a short section describing the background and discussing some basic properties. We also discuss the simple cases when the obstruction  $B$ is on the boundary of $\Grid(n)$. Section~\ref{sec:2unitsAway} deals with the case $B\in \R_2(n)$. In Section~\ref{sec:subdiagonals} we examine the case when $B$ is in $\R_1(n)$. This case is further split into cases depending on the proximity of $B$ to the points $(1,1)$ and $(n-1,n-1)$. The case when $B$ is on the diagonal of $\Grid(n)$ is handled in Section~\ref{sec:diagonal}. In Section~\ref{sec:comments} we summarize the resulting classification and record the small cases $5\le n\le 8$, which show how the stable picture is reached. Section~\ref{sec:antidiagonal} is devoted to the antidiagonal anomaly described above; we derive a closed-form criterion for the flip, state the resulting conjecture, and gather some open problems.

For background on lattice path enumeration techniques, including the inclusion-exclusion identities used throughout, we refer to the expository article by Krattenthaler in \cite{Kratt}.

\section{Background and basic cases} 

For $n\in\mathbb{N}$, let $\P_n$ be the set of paths from $(0,0)$ to $(n,n)$ using only $\E=(1,0)$ and $\N=(0,1)$ steps. For a point $A$ in $\Grid(n)$, we let
\[ f(A) = \#\{p\in\P_n: p \text{ passes through the point } A\}. \]
If $A=(a,b)$, then every $\E$-$\N$ path from $(0,0)$ to $(a,b)$ can be identified with a word of length $a+b$ over the alphabet $\{\E,\N\}$ having exactly $a$ copies of $\E$ and $b$ copies of $\N$. Similarly, a path from $(a,b)$ to $(n,n)$ corresponds to a word of length $2n-a-b$ having $n-a$ $\,\E$'s and $n-b$ $\,\N$'s. Therefore,
\[  f(A) = \binom{a+b}{a}\binom{2n-a-b}{n-a}. \]
In particular, $\big|\P_n\big| = f(0,0) = \binom{2n}{n}$. Kaplan \cite{K2017} showed that for $n\ge 2$ and $A\in\Grid(n)\backslash\{(0,0),(n,n)\}$, the maximum value of $f(A)$ is attained at the points $A=(1,1)$ and $A=(n-1,n-1)$. In other words,
\[ f(A)\le f(1,1) \;\text{ for every } A\in\Grid(n)\backslash\{(0,0),(n,n)\}, \]
with equality exactly when $A\in\{(1,1),(n-1,n-1)\}$.

The strategy is to optimize $f$ along the antidiagonals $a+b=k$ and then along the lines $a=b$ and $a=b+1$, reducing the two-variable problem to ratios of consecutive binomial coefficients. 

\medskip
As one can expect, the presence of an obstruction makes the problem more delicate. We now turn to the basic definitions and easy cases. For $B$ in $\Grid(n)\backslash\{(0,0),(n,n)\}$ and a list $L$ of points $A_1,\dots,A_m\in\Grid(n)$, we let
\begin{gather*} 
f(L) = \#\{p\in\P_n: p \text{ passes through all points in } L\}, \text{ and} \\
f_B(L) = \#\{p\in\P_n: p \text{ passes through every point in $L$, but avoids $B$} \}.
\end{gather*}
Let $A=(a,b)$ and $B=(c,d)$ be points in $\Grid(n)$ different from $(0,0)$ and $(n,n)$, and such that $a\le c$ and $b\le d$. Clearly, $f_B(A) = f(A) - f(A,B)$ and so
\begin{equation*}
 f_B(A) = \binom{a+b}{a}\left[\binom{2n-a-b}{n-a} - \binom{c+d-a-b}{c-a}\binom{2n-c-d}{n-c}\right].
\end{equation*}
From this formula we can easily verify the symmetric identities:
\begin{equation}\label{eq:symmetry}
\begin{aligned}
 f_B(A) &= f_{B'}(A'), \text{ where $A'=(b,a)$ and $B'=(d,c)$,} \\
 f_B(A) &= f_{\tilde B}(\tilde A), \text{ where $\tilde A=(n-a,n-b)$ and $\tilde B=(n-c,n-d)$}.
\end{aligned}
\end{equation}
Note that $A'$ is the reflection of $A$ about the diagonal $y=x$, and $\tilde A$ is the reflection of $A$ about the line $x+y=n$.

\begin{lemma}\label{lem:axes}
Let $A\in\Grid(n)\backslash\{(0,0),(n,n)\}$.
\begin{enumerate}[$(i)$]
\item If $B\in\{(0,1),(1,0),(n-1,n),(n,n-1)\}$, then $f_B(A)\le f_B(B')$.
\item If $B=(c,0)$ or $B=(0,c)$ with $1<c\le n$, then $f_B(A)\le f_B(1,1)$.
\item If $B=(d,n)$ or $B=(n,d)$ with $0\le d<n-1$, then $f_B(A)\le f_B(n-1,n-1)$.
\end{enumerate}
In $(ii)$ and $(iii)$, equality can occur only when $A\in\{(1,1),(n-1,n-1)\}$.
\end{lemma}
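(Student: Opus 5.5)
The plan is to reduce everything to two observations: an obstruction next to a corner forces every avoiding path through one specific point, and an obstruction on the bottom or left side cannot touch paths through $(1,1)$. Kaplan's result and the symmetries \eqref{eq:symmetry} then do the rest.

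For $(i)$, take first $B=(1,0)$. A path from $(0,0)$ avoids $(1,0)$ exactly when its first step is $\N$, that is, exactly when it passes through $(0,1)=B'$. So the $B$-avoiding paths are precisely the paths through $B'$. Hence for any $A$ we have
\[ f_B(A)\le \#\{p\in\P_n: p \text{ avoids } B\} = f_B(B'). \]
The case $B=(0,1)$ is identical with the roles of the steps exchanged. The cases $B=(n-1,n)$ and $B=(n,n-1)$ follow from the second identity in \eqref{eq:symmetry}. Alternatively, they follow directly from the same argument applied to the last step of the path.

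For $(ii)$, let $B=(c,0)$ with $2\le c\le n$. A monotone path through $(1,1)$ has left the line $y=0$ by the time its $x$-coordinate reaches $1$. It therefore cannot visit $(c,0)$, since that would require $x\ge 2$ while still $y=0$, before reaching $(1,1)$. Thus $f_B(1,1)=f(1,1)$. Then, for every $A$,
\[ f_B(A)\le f(A)\le f(1,1)=f_B(1,1), \]
by Kaplan's theorem (we assume $n\ge 2$, which is implicit since $c\le n$). Equality forces $f(A)=f(1,1)$, so $A\in\{(1,1),(n-1,n-1)\}$. The case $B=(0,c)$ follows from the first identity in \eqref{eq:symmetry}, since $(1,1)$ is fixed by reflection about $y=x$.

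For $(iii)$, apply the reflection $A\mapsto\tilde A$ about $x+y=n$. This maps $B=(d,n)$ to $\tilde B=(n-d,0)$, where $n-d>1$. It also maps $(n-1,n-1)$ to $(1,1)$. So part $(ii)$ applied to $\tilde B$, together with \eqref{eq:symmetry}, gives $f_B(A)=f_{\tilde B}(\tilde A)\le f_{\tilde B}(1,1)=f_B(n-1,n-1)$. The equality statement transfers as well, because the reflection maps the set $\{(1,1),(n-1,n-1)\}$ to itself. The case $B=(n,d)$ is handled the same way.

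There is no real obstacle here. The only point needing care is the observation that paths through $(1,1)$ cannot meet $(c,0)$ for $c\ge2$: this is what makes the bound sharp at $(1,1)$ and places the equality case entirely on Kaplan's theorem.
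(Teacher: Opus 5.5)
Your proof is correct and follows the paper's argument essentially line for line: the observation that $B$-avoiding paths are exactly those through $B'$ for $(i)$, then $f_B(1,1)=f(1,1)$ combined with Kaplan's bound and its equality case for $(ii)$, and the reflection about $x+y=n$ for $(iii)$. One wording fix: a path through $(1,1)$ can still be on $y=0$ when its $x$-coordinate reaches $1$ (e.g.\ one starting $\E\N$), so state instead that all its points with $y=0$ have $x\le 1$, which is the fact your next clause actually uses.
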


\begin{proof}
$(i)$ For $B\in\{(0,1),(1,0),(n-1,n),(n,n-1)\}$, every path through $A$ passes through exactly one of $B$ or $B'$, so $f(A)=f(A,B)+f(A,B')$. Hence $f_B(A)=f(A,B')\le f(B')=f_B(B')$.

$(ii)$ Since $B$ lies on an axis with $\|B\|>1$, no monotone path passes through both $(1,1)$ and $B$, so $f_B(1,1)=f(1,1)$. Therefore,
\[ f_B(A)\le f(A)\le f(1,1)=f_B(1,1), \]
where the second inequality is Kaplan's bound \cite{K2017}. The first inequality is strict unless $f(A,B)=0$, and the second is strict unless $A\in\{(1,1),(n-1,n-1)\}$; equality throughout therefore forces $A\in\{(1,1),(n-1,n-1)\}$.

$(iii)$ If $B=(d,n)$ with $d<n-1$, then $\tilde B=(n-d,0)$ with $n-d>1$, and $(ii)$ gives the inequality $f_{\tilde B}(\tilde A)\le f_{\tilde B}(1,1)$. By symmetry (identity \eqref{eq:symmetry}), this is $f_B(A)\le f_B(n-1,n-1)$. 

The case $B=(n,d)$ follows with a similar argument.
\end{proof}

When the point being avoided is not on the boundary of $\Grid(n)$, things are more interesting and less predictable. However, as we will show in later sections, for $n\ge 9$ the maximum values only happen at one or two of ten possible locations close to the endpoints of the grid. The case $n=2$ is trivial. Table~\ref{tab:n=3&4} shows a summary of the situation for $n=3$ and $n=4$ when the avoided point $B=(c,d)$ is such that $c\le d$. For example, if the point $(1,2)$ in $\Grid(3)$ is blocked, then $(2,1)$ is the point of maximal traffic with approximately 82\% of the $B$-avoiding paths going through it.

\begin{table}[ht]
\begin{tabular}{c|c|r|c|l}
 & $B$ & Max. & \%  & Location(s) of maximum traffic \\[2pt] \hline\hline
\vsp $n=3$ & $(1,1)$ & 5 & 50\% & $(0,1),(1,0),(0,2),(2,0),(2,2),(2,3),(3,2)$ \\
\vsp & $(1,2)$ & 9 & 82\% & $(2,1)$ \\
\vsp & $(2,2)$ & 5 & 50\% & $(0,1), (1,0), (1,3), (2,2), (2,3), (3,1), (3,2)$ \\[4pt] \hline\hline
\vsp $n=4$ & $(1,1)$ & 32 & 53\% & $(3,3)$ \\
\vsp & $(1,2)$ & 30 & 75\% & $(2,1)$ \\
\vsp & $(1,3)$ & 36 & 67\% & $(2,2)^\dagger$ \\
\vsp & $(2,2)$ & 20 & 50\% & $\{(0,1),(1,0),(3,4),(4,3)\}$ \\
\vsp & $(2,3)$ & 30 & 75\% & $(3,2)$ \\
\vsp & $(3,3)$ & 32 & 53\% & $(1,1)$ \\[4pt] \hline
\end{tabular}
\bigskip
\caption{Maximum-traffic locations for small $n$, with $B=(c,d)$ satisfying $c\le d$. The percentage is computed relative to the number of $B$-avoiding paths. The entry $\dagger$ is the unique location not among the ten special points. The remaining obstructions can be handled using Lemma~\ref{lem:axes} and the symmetry identities \eqref{eq:symmetry}.}
\label{tab:n=3&4}
\end{table}

For the remainder of this paper we will assume $n\ge 5$. 

\section{Obstructions in the region $\R_2$} 
\label{sec:2unitsAway}

Recall that $\R_2(n) = \{(x,y)\in\Grid(n): |x-y|\ge 2\}$.

\begin{lemma} \label{lem:thirdTotal}
If $(a,b)\in\R_2(n)$, then $f(a,b) < \frac13 \binom{2n}{n}$. In other words, the number of paths in $\P_n$ passing through $(a,b)$ is less than a third of the total number of paths in $\P_n$.
\end{lemma}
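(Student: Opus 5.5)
We reduce to points on the lines $|x-y|=2$ and then compare with the total $\binom{2n}{n}$. The key observation is that, along each antidiagonal $a+b=k$, the function
\[ g_k(a)=f(a,k-a)=\binom{k}{a}\binom{2n-k}{n-a} \]
is a product of two binomials centered at $a=k/2$. It is therefore symmetric about $a=k/2$ and log-concave in $a$. Hence $g_k$ is unimodal with its peak at the center, and it decreases as $|2a-k|$ grows. Consequently, for $(a,b)\in\R_2(n)$ with $a+b=k$ we have $f(a,b)\le f(a',b')$, where $(a',b')$ is the point with $a'+b'=k$ and $|a'-b'|=2$ (this uses $|a-b|\equiv k \pmod 2$, so $|a-b|\ge2$ on that antidiagonal). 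By the diagonal symmetry \eqref{eq:symmetry}, it suffices to bound $f(m+2,m)$ for $0\le m\le n-2$.

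Next we handle the line $b=a-2$ by a probabilistic reading. Think of a uniform path in $\P_n$ as a uniformly random arrangement of $n$ $\E$'s and $n$ $\N$'s. Then $f(m+2,m)/\binom{2n}{n}$ is the probability that, after $2m+2$ steps, the walk has made exactly two more $\E$ steps than $\N$ steps. Two routes are available.
\begin{enumerate}[(a)]
\item Directly compute the ratio $f(m+3,m+1)/f(m+2,m)$ of consecutive terms and show that the sequence in $m$ is unimodal, with maximum near $m\approx n/2-1$ (mirroring Kaplan's argument along $a=b$ and $a=b+1$). Then bound the central value.
\item More cleanly, use the three levels of the displacement. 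By the same unimodality on the antidiagonal $k=2m+2$, we have
\[ f(m+2,m)=f(m,m+2)\le f(m+1,m+1). \]
Moreover, the paths through $(m+2,m)$, $(m+1,m+1)$ and $(m,m+2)$ are disjoint and all lie in $\P_n$. Hence
\[ 3f(m+2,m)\le f(m+2,m)+f(m+1,m+1)+f(m,m+2)\le\binom{2n}{n}. \]
\end{enumerate}

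Route (b) gives the bound immediately, provided we get strictness. Strictness holds because $f(m+2,m)<f(m+1,m+1)$ strictly: log-concavity makes $g_k$ strictly increasing toward the center whenever both binomials are nonzero. When $f(m+2,m)=0$ the strict bound is trivial. Alternatively, strictness follows because some paths avoid all three points, for instance paths through $(m+3,m-1)$ when $m\ge1$, or the path $\E^n\N^n$ when $m=0$ and $n\ge3$.

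The main obstacle is justifying the strict monotonicity of $g_k$ away from the center. We would write out the ratio
\[ \frac{g_k(a+1)}{g_k(a)}=\frac{(k-a)(n-a)}{(a+1)(n-k+a+1)}, \]
and check that it exceeds $1$ exactly when $a<(k-1)/2$, within the range where both binomials are nonzero. This is a one-line algebraic check: the difference of the numerator and denominator factors as $(n+1)(k-1-2a)$. Combined with the symmetry $g_k(a)=g_k(k-a)$, this settles both the reduction step and the strictness.
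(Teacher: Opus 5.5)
Your core mechanism in route (b) is exactly the paper's: take three distinct points on one antidiagonal, one being the reflection of the given point and another being strictly larger by the ratio test. Your ratio $g_k(a+1)/g_k(a)$ and the factorization $(n+1)(k-1-2a)$ are also correct.

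The gap is the reduction step that precedes it. A point on $x+y=k$ has $|a-b|\equiv k \pmod 2$. So when $k$ is odd, every point of $\R_2(n)$ on that line has $|a-b|\ge 3$, and that antidiagonal contains \emph{no} point with $|a'-b'|=2$. Your reduction to $f(m+2,m)$ therefore drops every point of $\R_2(n)$ with $a+b$ odd, for example $(0,3)$ or $(1,4)$. The parenthetical ``so $|a-b|\ge 2$ on that antidiagonal'' does not supply the missing point.

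The repair is easy, and it makes the reduction unnecessary: apply your three-point argument directly to the given point. If $b\ge a+2$, then $(a,b)$, $(a+1,b-1)$, $(b,a)$ are distinct points on $x+y=a+b$. Your ratio with $k=a+b$ exceeds $1$ because $2a+1<a+b$, so $f(a,b)<f(a+1,b-1)$. Together with $f(a,b)=f(b,a)$, this gives $3f(a,b)<\binom{2n}{n}$, which is precisely the paper's proof. Alternatively, on odd antidiagonals you can reduce to $|a'-b'|=3$ and use the three points $(m+3,m)$, $(m+2,m+1)$, $(m,m+3)$.

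One smaller point: your ``alternative'' strictness argument is wrong at both ends. For $m=0$, the path $\E^n\N^n$ passes through $(2,0)$. For $m=n-2$, the point $(m+3,m-1)$ is off the grid. In both cases the three points exhaust the antidiagonal, so every path meets one of them. Your primary strictness argument via $f(m+2,m)<f(m+1,m+1)$ is fine, so simply drop the alternative.
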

\begin{proof}
Assume without loss of generality that $b\ge a+2$. Then $(a,b)$, $(a+1,b-1)$, $(b,a)$ are 3 distinct points on the line $x+y = a+b$, and we have
\begin{equation*}
 \frac{f(a+1,b-1)}{f(a,b)} = \frac{\binom{a+b}{a+1}\binom{2n-a-b}{n-b+1}}{\binom{a+b}{b}\binom{2n-a-b}{n-a}} 
 = \frac{b(n-a)}{(a+1)(n-b+1)} > 1
\end{equation*}
since $b>a+1$ and $n-a>n-b+1$. Therefore, $f(a,b) < f(a+1,b-1)$. Finally, since $f(a,b)=f(b,a)$, we conclude $3f(a,b) < f(a,b)+f(a+1,b-1)+f(b,a) \le \tbinom{2n}{n}$.
\end{proof}

\begin{lemma} \label{lem:A_in_R2}
Let $B=(c,d)\in\R_2(n)$ be such that $c\ge 1$, $d\ge 1$, and $c+d\le n$. If $A\in\R_2(n)$, then $f_B(A) \le f_B(1,1)$.
\end{lemma}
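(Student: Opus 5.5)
The plan is to compare both sides with the threshold $\frac13\binom{2n}{n}$. Lemma~\ref{lem:thirdTotal} gives an upper bound for the left-hand side immediately: since $A\in\R_2(n)$,
\[ f_B(A)\le f(A) < \tfrac13\binom{2n}{n}. \]
So it suffices to show $f_B(1,1)\ge \frac13\binom{2n}{n}$. For this I write $f_B(1,1)=f(1,1)-f((1,1),B)$, which is valid because $c,d\ge1$ puts $B$ weakly northeast of $(1,1)$. I also use $f(1,1)=2\binom{2n-2}{n-1}=\frac{n}{2n-1}\binom{2n}{n}$.

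The key step is to bound the subtracted term by a third of $f(1,1)$, using Lemma~\ref{lem:thirdTotal} a second time on a shifted grid. Translate by $(-1,-1)$. The paths counted by $f(1,1)$ are then, after removing the forced first two steps (two choices), the monotone paths of $\P_{n-1}$ from $(0,0)$ to $(n-1,n-1)$. Among these, the ones counted by $f((1,1),B)$ are those passing through $B^*=(c-1,d-1)$. This point lies in $\Grid(n-1)$, and it satisfies $|(c-1)-(d-1)|=|c-d|\ge2$, so $B^*\in\R_2(n-1)$. Applying Lemma~\ref{lem:thirdTotal} in $\Grid(n-1)$ (its proof only uses the ratio computation and the symmetry $(a,b)\leftrightarrow(b,a)$, valid for any grid size) gives
\[ f((1,1),B) = 2\binom{c+d-2}{c-1}\binom{2n-c-d}{n-c} < \tfrac13\cdot 2\binom{2n-2}{n-1} = \tfrac13 f(1,1). \]
Equivalently, $B$, its neighbour one step toward the diagonal, and its mirror image are three distinct points on the same antidiagonal of the $(1,1)$–$(n,n)$ subgrid, and $B$ carries the least traffic among them.

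Combining the two bounds,
\[ f_B(1,1) > \tfrac23 f(1,1) = \frac{2n}{3(2n-1)}\binom{2n}{n} > \tfrac13\binom{2n}{n} > f_B(A), \]
which proves the lemma, in fact with strict inequality. The hypothesis $c+d\le n$ does not appear to be needed in this route; presumably it is used elsewhere by symmetry to reduce to this half of the grid.

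The one point to check carefully is the degenerate situation where $B^*$ lies on the boundary of $\Grid(n-1)$, i.e.\ $c=1$ or $d=1$. The three points on the antidiagonal are still distinct and the ratio computation in Lemma~\ref{lem:thirdTotal} still applies there, so this case needs no separate argument.

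I initially considered the cruder bound $f((1,1),B)\le \frac12 f(B)$, but it fails for points like $B=(2,4)$. That is why I plan the shifted-grid argument instead.
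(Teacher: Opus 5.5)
Your proof is correct and follows the paper's argument. Both apply Lemma~\ref{lem:thirdTotal} twice: once to $A$ in $\Grid(n)$, and once to $(c-1,d-1)$ in $\Grid(n-1)$, which gives $f_B(1,1)>\frac23 f(1,1)=\frac43\binom{2n-2}{n-1}$. The only difference is cosmetic: you compare both sides against $\frac13\binom{2n}{n}$, whereas the paper compares against $\frac43\binom{2n-2}{n-1}$. You are also right that the hypothesis $c+d\le n$ plays no role here; the paper's proof does not use it either.
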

\begin{proof}
Observe that $(c-1,d-1)$ is a point in $\Grid(n-1)$ with $|(d-1)-(c-1)| = |d-c|\ge 2$ and therefore, by Lemma~\ref{lem:thirdTotal},
\begin{equation*} 
  \binom{c-1+ d-1}{c-1}\binom{2(n-1)-(c-1)-(d-1)}{n-1-(c-1)} < \frac13 \binom{2(n-1)}{n-1}.
\end{equation*}
Now, writing the left-hand side of the inequality as $\binom{c+d-2}{c-1}\binom{2n-c-d}{n-c}$, we get
\begin{align*} 
 f_B(1,1) &= 2\binom{2n-2}{n-1} -2 \binom{c+d-2}{c-1}\binom{2n-c-d}{n-c} \\[1ex]
 &> 2\binom{2n-2}{n-1} - \frac23 \binom{2n-2}{n-1} = \frac43 \binom{2n-2}{n-1}. 
\end{align*}
On the other hand,
\[ f_B(A)\le f(A) < \frac13 \binom{2n}{n} = \frac{2(2n-1)}{3n} \binom{2n-2}{n-1} < \frac43 \binom{2n-2}{n-1}, \]
which implies $f_B(A)<f_B(1,1)$.
\end{proof}

\begin{lemma} \label{lem:subDiagonal}
Let $B=(c,d)\in\R_2(n)$ be such that $c\ge 1$, $d\ge 1$, and $c+d\le n$. Let $a$ be an integer such that $1\le a\le n$. 
\begin{enumerate}[$(i)$]
\item If $d\ge c+2$, then $f_B(a-1,a)\le f_B(a,a-1) \le f_B(1,0)$.
\item If $c\ge d+2$, then $f_B(a,a-1)\le f_B(a-1,a) \le f_B(0,1)$.
\end{enumerate}
\end{lemma}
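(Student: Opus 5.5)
The plan is to prove $(i)$ and deduce $(ii)$ from it using the diagonal reflection in \eqref{eq:symmetry}. Throughout, write $A=(a,a-1)$ and $A'=(a-1,a)$, and assume $d\ge c+2$.

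\emph{First inequality.} Since $f(A)=f(A')$, the inequality $f_B(A')\le f_B(A)$ is equivalent to $f(A,B)\le f(A',B)$. I will compare these two counts case by case, according to whether $B$ lies after $A$, before $A$, or is incomparable with it.

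If $A\le B$ coordinatewise, then $a\le c$. The prefix count $\binom{2a-1}{a}$ and the suffix count from $B$ to $(n,n)$ are common to both $f(A,B)$ and $f(A',B)$, so the ratio is
\[
\frac{f(A',B)}{f(A,B)}=\frac{\binom{m}{c-a+1}}{\binom{m}{c-a}}=\frac{d-a+1}{c-a+1}\ge 1,\qquad m=c+d-2a+1 .
\]

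If $B\le A$, then $d\le a-1$, and similarly
\[
\frac{f(A',B)}{f(A,B)}=\frac{a-c}{a-d}\ge 1 .
\]

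In the remaining configurations $f(A,B)=0$, for example when $B$ is comparable with $A'$ but not with $A$. In that case the inequality is trivial. I will check that the borderline cases $d\in\{a-1,a\}$ do not break this split.

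\emph{Second inequality.} For $a=1$ there is nothing to prove. For $a=n$, note that $f(n,n-1)=f(1,0)$, so it suffices to show $f((1,0),B)\le f((n,n-1),B)$. Both are fractions of $f(B)$, namely $\tfrac{c}{c+d}f(B)$ and $\tfrac{n-c}{2n-c-d}f(B)$. The required inequality $(n-c)(c+d)\ge c(2n-c-d)$ reduces to $n(d-c)\ge0$, which holds.

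For $2\le a\le n-1$, I will use the lower bound
\[
f_B(1,0)=f(1,0)-\tfrac{c}{c+d}f(B)>\binom{2n-1}{n}-\tfrac12\cdot\tfrac13\binom{2n}{n}=\tfrac23\binom{2n-1}{n}.
\]
This follows from Lemma~\ref{lem:thirdTotal}, since $B\in\R_2$, together with $c/(c+d)<1/2$. For the upper bound, $f(a,a-1)$ is unimodal along the line $x-y=1$ with its minimum in the middle; this is Kaplan's ratio argument, and I would verify it by computing $f(a+1,a)/f(a,a-1)$. Hence for $3\le a\le n-2$ we get
\[
f_B(a,a-1)\le f(a,a-1)\le f(3,2)=10\binom{2n-5}{n-3}\approx\tfrac{5}{16}\binom{2n-1}{n},
\]
which is well below $\tfrac23\binom{2n-1}{n}$ for $n\ge5$.

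\emph{Main obstacle.} The hard cases are $a=2$ and $a=n-1$. Here $f(2,1)=\tfrac{3n}{2(2n-1)}\binom{2n-1}{n}\approx\tfrac34\binom{2n-1}{n}$, which exceeds the crude $\tfrac23$ bound, so discarding the obstruction term is too lossy. For these two points I would work with the exact expressions
\[
f_B(1,0)-f_B(2,1)=\bigl[f(1,0)-f(2,1)\bigr]-\bigl[f((1,0),B)-f((2,1),B)\bigr],
\]
and likewise with $(n-1,n-2)$ in place of $(2,1)$. Every path through $(2,1)$ and $B$ passes through $(1,0)$ or $(0,1)$, so I would bound $f((1,0),B)-f((2,1),B)$ by the number of paths through $(1,0)$ and $B$ that avoid $(2,1)$. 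I would then compare this with $f(1,0)-f(2,1)$ using the ratio formulas above together with $d\ge c+2$ and $c+d\le n$. If this comparison fails, the fallback is to apply the lemma's own first-inequality trick to transfer paths across the diagonal.

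\emph{Part $(ii)$.} This follows from $(i)$ by the reflection $A\mapsto A'$, $B\mapsto B'$ in \eqref{eq:symmetry}.
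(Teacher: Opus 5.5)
You have the first inequality right (both ratio computations and the incomparable case), and $(ii)$ does follow from $(i)$ by reflection. The second inequality, however, is not established.

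\textbf{Middle range.} Your estimate there is off by a factor of $2$. In fact $f(3,2)=10\binom{2n-5}{n-3}=\frac{5n(n-1)}{2(2n-1)(2n-3)}\binom{2n-1}{n}$, which tends to $\frac{5}{8}\binom{2n-1}{n}$; your $\frac{5}{16}$ is its ratio to $\binom{2n}{n}$. It exceeds $\frac{2}{3}\binom{2n-1}{n}$ exactly when $n^2-17n+12<0$, i.e.\ for every $n\le 16$. For instance, at $n=8$ you get $f(3,2)=4620>4290=\frac{2}{3}\binom{15}{8}$. So, short of a separate check, your lower bound for $f_B(1,0)$ leaves $a\in\{3,n-2\}$ open for $5\le n\le 16$, which includes values of $n$ where the lemma is later used.

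\textbf{Cases $a=2$ and $a=n-1$.} These are more serious: you give only a plan, plus a fallback in case the plan fails, so nothing is proved there.

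\textbf{Case $a=n$.} There is a slip: $f((n,n-1),B)=\frac{n-d}{2n-c-d}f(B)$, not $\frac{n-c}{2n-c-d}f(B)$. The correct comparison reduces to $(d-c)(n-c-d)\ge 0$, which is where the hypothesis $c+d\le n$ is needed, so your conclusion there survives.

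\textbf{The missing idea.} Split according to where $a$ sits relative to $c$ and $d$, not according to the size of $a$. Then the obstruction is handled exactly instead of estimated. Let $g_B(P)$ be the number of $B$-avoiding paths from $P$ to $(n,n)$.

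\emph{Case $a\le c$.} Here $d\ge a+2$, so no path ending at $A$, $(a,a+1)$, $(a+1,a)$, or $(a+2,a-1)$ meets $B$. Hence $f_B(A)=\binom{2a-1}{a}\bigl[g_B(a,a+1)+2g_B(a+1,a)+g_B(a+2,a-1)\bigr]$. The three points lie on one antidiagonal, and the $B$-avoiding paths through $(1,0)$ and each of them carry the same suffix factors. Their prefix factors are $\binom{2a}{a-1}\ge\binom{2a-1}{a}$, $\binom{2a}{a}=2\binom{2a-1}{a}$, and $\binom{2a}{a-1}$, which gives $f_B(A)\le f_B(1,0)$.

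\emph{Case $a-1\ge d$.} The same argument run backwards from $(n,n)$ gives $f_B(A)\le f_B(n,n-1)$. The corrected $a=n$ comparison then finishes.

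\emph{Case $c<a\le d$.} Here $A$ and $B$ are incomparable, so $f_B(A)=f(A)=f((1,0),(a,a))+f((1,0),(a+1,a-1))$. For $a<d$ the points $B$, $(a,a)$, $(a+1,a-1)$ are pairwise incomparable, so $f(1,0)\ge f((1,0),B)+f(A)$, i.e.\ $f_B(A)\le f_B(1,0)$. For $a=d$, an injection from paths through $(1,0)$, $B$, $(d,d)$ into $B$-avoiding paths through $(1,0)$ and $(c+1,d+1)$ absorbs the overlap.

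This treats $a=2$ and $a=n-1$ like every other value and needs no restriction on $n$.
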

\begin{proof}
We will only prove $(i)$. Part $(ii)$ follows with a similar symmetric argument. We consider three separate cases for the values of $a$ as illustrated in Figure~\ref{fig:subDiagonal-cases}. 

\begin{figure}[ht]
\begin{tikzpicture}[scale=0.6]
\draw[very thin, gray!50] (0,0) rectangle (8,8);
\draw[very thin, gray!80] node[below left] {\scriptsize $(0,0)$} (0,0) -- (8,8) node[above right] {\scriptsize $(n,n)$};
\draw[very thin, dashed] (1,0) -- (8,7);
\draw[very thin, gray!50] (2,0) -- (2,8);
\draw[very thin, gray!50] (0,5) -- (8,5);
\node[gray,right=2] at (8,7) {\scriptsize $y=x-1$};
\fill[black] (2,5) circle (0.13) node[above right] {\small $B=(c,d)$};
\draw[line width=3, orange!70, opacity=0.5] (1,0) -- (2,1);
\node[orange!70!black,rotate=45] at (1.9,0) {\scriptsize Case 1};
\draw[line width=3, blue!70, opacity=0.5] (3,2) -- (5,4);
\node[blue!70!black,rotate=45] at (4,2) {\scriptsize Case 3};
\draw[line width=3, green!50!black, opacity=0.5] (6,5) -- (8,7);
\node[green!40!black,rotate=45] at (7,5) {\scriptsize Case 2};
\foreach \x/\y in {1/0, 2/1, 3/2, 4/3, 5/4, 6/5, 7/6, 8/7}{
  \fill[black] (\x,\y) circle (0.08);
}
\end{tikzpicture}
\caption{The point $A=(a,a-1)$ moves along the subdiagonal $y=x-1$.}
\label{fig:subDiagonal-cases}
\end{figure}

\smallskip
\noindent{\em Case 1:} $a\le c$. Let $g_B(A)$ be the number of paths from $A$ to $(n,n)$ that avoid the point $B=(c,d)$. If $A_1=(a,a+1)$, $A_1'=(a+1,a)$, and $A_2=(a+2,a-1)$, then
\[ f_B(a,a-1) = \binom{2a-1}{a} \big[g_B(A_1) + 2g_B(A_1') + g_B(A_2)\big]. \]
Moreover, with $I=(1,0)$, we have
\begin{align*}
 f_B(I,A_1) &= \tbinom{2a}{a-1} g_B(A_1) \ge \tbinom{2a-1}{a} g_B(A_1), \\
 f_B(I,A_1') &= \tbinom{2a}{a} g_B(A_1') = 2\tbinom{2a-1}{a} g_B(A_1'), \\
 f_B(I,A_2) &= \tbinom{2a}{a-1} g_B(A_2) \ge \tbinom{2a-1}{a} g_B(A_2),
\end{align*}
and thus $f_B(a,a-1)\le f_B(I,A_1) + f_B(I,A_1') + f_B(I,A_2) \le f_B(I)$.

\medskip
Furthermore, since $\binom{c+d-2a+1}{c-a}\le \binom{c+d-2a+1}{c-a+1}$ for $d\ge c+1$, we have 
\[  \binom{2a-1}{a}\binom{c+d-2a+1}{c-a}\binom{2n-c-d}{n-c} \le \binom{2a-1}{a-1}\binom{c+d-2a+1}{c-a+1}\binom{2n-c-d}{n-c}. \]
Thus the number of paths through $(a,a-1)$ and $B$ is less than or equal to the number of paths through $(a-1,a)$ and $B$. Since $f(a-1,a)=f(a,a-1)$, we then get
\[ f(a-1,a) - f((a-1,a),B) \le  f(a,a-1) - f((a,a-1),B), \]
which means $f_B(a-1,a)\le f_B(a,a-1)$. This completes the proof of $(i)$ when $a\le c$.

\smallskip
\noindent{\em Case 2:} $a-1\ge d$. In this case, one can use a symmetric argument (looking at paths from $(n,n)$ to $(0,0)$) to conclude that $f_B(a-1,a)\le f_B(a,a-1)\le f_B(n,n-1)$. Since $f(n,n-1)=f(I)$, we have $f_B(I)-f_B(n,n-1) = f((n,n-1),B) - f(I,B)$. A direct computation gives
\[ \frac{f((n,n-1),B)}{f(I,B)} = \frac{(c+d)(n-d)}{c(2n-c-d)}, \]
and the inequality $(c+d)(n-d) \ge c(2n-c-d)$ simplifies to $(d-c)(n-c-d) \ge 0$, which holds because $d>c$ and $c+d\le n$. Hence $f_B(n,n-1) \le f_B(I)$, and so $f_B(a-1,a) \le f_B(a,a-1) \le f_B(I)$.

\smallskip
\noindent{\em Case 3:} $c<a<d+1$. Let $A_0=(a,a)$ and $A_3=(a+1,a-1)$. If $c<a<d$, then every path in $\P_n$ passes through at most one of the points $B$, $A_0$, or $A_3$, and so $f(I)$ is greater than the sum of $f(I,B)$, $f(I,A_0)$, and $f(I,A_3)$. Thus, 
\[ f_B(I) = f(I) - f(I,B) > f(I,A_0)+f(I,A_3). \]
Now, since $f(I,A_0) =\binom{2a-1}{a}\binom{2n-2a}{n-a}$ and $f(I,A_3) =\binom{2a-1}{a}\binom{2n-2a}{n-a-1}$, we get
\begin{equation} \label{eq:throughA}
 f(I,A_0)+f(I,A_3) = \binom{2a-1}{a}\binom{2n-2a+1}{n-a} = f(a,a-1) = f_B(a,a-1), 
\end{equation}
and so $f_B(I)>f_B(a,a-1)$.

If $a=d$, then there are paths that go through both $B$ and $A_0$, and the above decomposition is not disjoint. However, every path $p\in\P_n$ passing through $I$, $B$, and $A_0$ can be injectively mapped into a path $\phi(p)$ passing through $I$, $B_3= (c+1,d+1)$, and either $B_1=(c-1,d+1)$ or $B_2=(c+1,d-1)$. 

\begin{figure}[h]
\begin{tikzpicture}[scale=0.65]
\begin{scope}
\draw[very thin, gray!40] (0,0) grid (6,6);
\draw[very thin, dashed, gray!60] (0,0) -- (6,6);
\foreach \x in {0,...,6}{\foreach \y in {0,...,6}{\fill[gray!50] (\x,\y) circle (0.05);}}
\draw[very thick, line cap = round, blue!70!black!60] (0,0) -- (1,0) -- (1,4) -- (2,4) -- (4,4) -- (4,5) -- (6,5) -- (6,6);
\draw[line width = 3, line cap = round, blue, opacity=0.3]  (1,4) -- (4,4) -- (4,5);
\fill (1,0) circle (0.1) node[below right=-1pt] {\scriptsize $I$};
\fill (2,4) circle (0.1) node[above left=-1pt] {\scriptsize $B$};
\fill (4,4) circle (0.1) node[above right=-1pt] {\scriptsize $A_0$};
\fill[gray] (4,3) circle (0.1) node[below right=-1pt] {\scriptsize $A$};
\fill[gray] (5,3) circle (0.1) node[below right=-1pt] {\scriptsize $A_3$};
\draw[gray] (1,5) circle(0.085);
\draw[gray] (3,5) circle(0.085);
\node[below=10] at (3,0) {\small $p = p_1\, \mathsf{E}_B\, \mathsf{E}^j\, \mathsf{N}\, p_2$};
\end{scope}
\draw[->, thick] (7,3) -- (8.5,3) node[midway, above] {$\phi$};
\begin{scope}[xshift=9.5cm]
\draw[very thin, gray!40] (0,0) grid (6,6);
\draw[very thin, dashed, gray!60] (0,0) -- (6,6);
\foreach \x in {0,...,6}{\foreach \y in {0,...,6}{\fill[gray!50] (\x,\y) circle (0.05);}}
\draw[very thick, line cap = round, red!60!black!60] (0,0) -- (1,0) -- (1,5) -- (6,5) -- (6,6);
\draw[line width = 3, line cap = round, red!70!black, opacity=0.3]  (1,4) -- (1,5) -- (4,5);
\fill (1,0) circle (0.1) node[below right=-1pt] {\scriptsize $I$};
\fill[red!60!black] (1,5) circle (0.1) node[above left=-1pt] {\scriptsize $B_1$};
\fill[red!60!black] (3,5) circle (0.1) node[above right=-1pt] {\scriptsize $B_3$};
\fill[gray] (2,4) circle (0.1) node[above left=-1pt] {\scriptsize $B$};
\fill[gray] (4,4) circle (0.1) node[above right=-1pt] {\scriptsize $A_0$};
\fill[gray] (4,3) circle (0.1) node[below right=-1pt] {\scriptsize $A$};
\fill[gray] (5,3) circle (0.1) node[below right=-1pt] {\scriptsize $A_3$};
\node[below=10] at (3,0) {\small $\phi(p) = p_1\, \mathsf{N}\, \mathsf{E}^{j+1}\, p_2$};
\end{scope}
\end{tikzpicture}
\caption{Illustration of $\phi$. The path $\mathsf{E}_B \mathsf{E}^j \mathsf{N}$ is replaced by $\mathsf{N}\mathsf{E}^{j+1}$.}
\label{fig:phi-injection}
\end{figure}

More precisely, if $p$ is of the form $p=p_1\E_B\E^j\N\, p_2$, where $\E_B$ is an $\E$-step ending at $B$, then we let $\phi(p)=p_1\N\E^{j+1} p_2$. This path passes through $I$, $B_1$, and $B_3$. On the other hand, if $p=q_1\N_B\E^j\N\, p_2$, where $\N_B$ is an $\N$-step ending at $B$, then we let $\phi(p)=q_1\E\N^2\E^{j-1} p_2$, which passes through $I$, $B_2$, and $B_3$. In other words, $f(I,B,A_0) \le f_B(I,B_3)$, and therefore
\begin{align*}
 f_B(a,a-1) &= f(I,A_0)+f(I,A_3) \qquad \text{[by equation \eqref{eq:throughA}]} \\
 &=  f_B(I,A_0) + f(I,B,A_0) + f_B(I,A_3) \\
 &\le f_B(I,A_0) + f_B(I,B_3) + f_B(I,A_3) < f_B(I).
\end{align*}
In conclusion, if $c<a<d+1$, we have $f_B(1,0) = f_B(I) > f_B(a,a-1)$.
\end{proof}

\begin{lemma} \label{lem:Diagonal}
Let $B=(c,d)\in\R_2(n)$ be such that $c\ge 1$, $d\ge 1$, and $c+d\le n$. Then,
\begin{equation*}
 f_B(a,a) \le
 \begin{cases}
 f_B(1,1) &\text{if }\, 1\le a\le \min\{c,d\},\\
 f_B(n-1,n-1) &\text{if }\, \max\{c,d\}\le a \le n-1.
 \end{cases}
\end{equation*}
On the other hand, if $\min\{c,d\}<a< \max\{c,d\}$ and $n\ge 8$, then $f_B(a,a) \le \max\{f_B(1,0),f_B(0,1)\}$.
\end{lemma}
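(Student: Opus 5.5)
The plan is to treat the three ranges of $a$ separately. The first two ranges are handled by a local comparison that moves the point $(a,a)$ one diagonal step at a time toward the nearer corner, avoiding $B$. The middle range is handled by a global counting estimate against the boundary point on the same side as $B$, in the spirit of Case~3 of Lemma~\ref{lem:subDiagonal}. By the symmetry identities \eqref{eq:symmetry} (reflection about $y=x$), I may assume $d\ge c+2$ throughout, so $\min\{c,d\}=c$ and $\max\{c,d\}=d$.

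\emph{Range $1\le a\le c$.} Let $g_B(P)$ denote the number of $B$-avoiding paths from $P$ to $(n,n)$. Since $a\le c<d$, no path from $(0,0)$ to $(a,a)$ meets $B$, so
\[ f_B(a,a)=\tbinom{2a}{a}g_B(a,a). \]
I will show $f_B(a,a)\le f_B(a-1,a-1)$ for $2\le a\le c$ and then iterate down to $(1,1)$. Splitting the paths through $(a-1,a-1)$ according to the point they reach on the antidiagonal $x+y=2a$ gives
\[ f_B(a-1,a-1)=\tbinom{2a-2}{a-1}\big[2g_B(a,a)+g_B(a+1,a-1)+g_B(a-1,a+1)\big]. \]
Since $\binom{2a}{a}=\frac{2(2a-1)}{a}\binom{2a-2}{a-1}$, it suffices to prove
\[ g_B(a+1,a-1)+g_B(a-1,a+1)\ge \tfrac{2(a-1)}{a}\,g_B(a,a). \]
I plan to prove this by expanding both sides one or two further antidiagonals. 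For instance, $g_B(a,a)=g_B(a+1,a)+g_B(a,a+1)$, while $g_B(a+1,a-1)\ge g_B(a+1,a)$ and similarly $g_B(a-1,a+1)\ge g_B(a,a+1)$. The hypothesis $|c-d|\ge 2$ guarantees that $B$ is not one of the intermediate points used. This already gives a factor $1$. To reach the factor $2(a-1)/a<2$, I would add the extra terms $g_B(a+2,a-1)$ and $g_B(a-1,a+2)$ from the next antidiagonal and compare them with $g_B(a+1,a)$ and $g_B(a,a+1)$. If a direct inequality fails near $B$, I would fall back on an injection of the type $\phi$ used in Lemma~\ref{lem:subDiagonal}, rerouting a path around $B$ to show that the lower-left point gains at least as many avoiding paths.

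\emph{Range $d\le a\le n-1$.} Here $B$ lies weakly before $(a,a)$, and the mirror-image argument, run on paths from $(n,n)$ back toward $(a,a)$, compares $(a,a)$ with $(a+1,a+1)$ and iterates up to $(n-1,n-1)$. The first argument never used $c+d\le n$, so it transfers through the reflection $\tilde{\ }$ of \eqref{eq:symmetry} even though $\tilde B$ violates that hypothesis.

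\emph{Middle range $c<a<d$, with $n\ge 8$.} Here I compare with $f_B(1,0)$. Because $f_B(a,a)\le f(a,a)$ and Kaplan-type monotonicity gives $f(a,a)\le f(2,2)$ for $2\le a\le n-2$, it suffices to show $f(1,0)-f(I,B)\ge f(2,2)$ when $f(a,a)$ is not small. I would not use the crude bound $f(I,B)<\tfrac13\binom{2n}{n}$ from Lemma~\ref{lem:thirdTotal}, which is too weak. Instead I would argue as in Case~3 of Lemma~\ref{lem:subDiagonal}: paths through $I$ passing through $(a,a)$, through $(a+1,a-1)$, and through $B$ are almost disjoint. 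From this,
\[ f_B(I)\ge f(I,(a,a))+f(I,(a+1,a-1)) - (\text{overlap with }B). \]
It then remains to show that $f(I,(a,a))+f(I,(a+1,a-1))$ dominates $f_B(a,a)=f(a,a)-f((a,a),B)$, using explicit binomial ratios.

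I expect this middle range to be the main obstacle. The corresponding sums no longer telescope to exactly $f(a,a)$ as they did in \eqref{eq:throughA}: on the diagonal, half of the paths through $(a,a)$ enter through $(a-1,a)$, which is not reachable from $I$ in the right proportion. Consequently the inequality must genuinely use the subtracted term $f((a,a),B)$ together with the bound $n\ge 8$, and some small cases $a=c+1$ or $a=d-1$ may need separate ratio computations.
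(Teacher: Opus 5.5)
Your argument for the two outer ranges rests on a false claim: $f_B(a,a)\le f_B(a-1,a-1)$ need not hold for $2\le a\le\min\{c,d\}$. Take $n=22$, $B=(10,12)$ (so $d-c=2$ and $c+d=n$), and $a=10$. Then
$g_B(10,10)=\binom{24}{12}-\binom{22}{10}=2057510$, $g_B(11,9)=\binom{24}{11}=2496144$, and $g_B(9,11)=\binom{24}{11}-2\binom{22}{10}=1202852$.
Your target inequality $g_B(a+1,a-1)+g_B(a-1,a+1)\ge\frac{2(a-1)}{a}g_B(a,a)$ therefore reads $3698996\ge 3703518$, which is false. Equivalently, $f_B(10,10)=\binom{20}{10}\cdot 2057510$ exceeds $f_B(9,9)=\binom{18}{9}\cdot 7814016$. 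Near the centre $f(a,a)$ decays very slowly, and the obstruction removes more paths from $(9,9)$ than from $(10,10)$.

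Since the inequality itself is false, no rerouting injection of $\phi$-type can rescue it. By the antidiagonal symmetry, the same example breaks your chain for $a\ge\max\{c,d\}$: here $f_B(12,12)>f_B(13,13)$.

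The missing idea is to compare $(a,a)$ with $(1,1)$ in one step rather than by a chain. Because $\binom{2a}{a}=2\binom{2a-1}{a}$, the number of paths from the origin to $(a,a)$ equals the number from the origin through $(1,1)$ to $(a,a+1)$, and likewise for $(a+1,a)$. Since $c+d\ge 2a+2$, $B$ cannot be met before the antidiagonal $x+y=2a+1$. This gives $f_B(a,a)=f_B((1,1),(a,a+1))+f_B((1,1),(a+1,a))\le f_B(1,1)$.

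The middle range is not yet an argument, and its difficulty is misdiagnosed. For $c<a<d$ the points $(a,a)$ and $B$ are incomparable, so $f((a,a),B)=0$ and $f_B(a,a)=f(a,a)$. There is no subtracted term to exploit.

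Your decomposition through $(a,a)$ and $(a+1,a-1)$ captures only $\frac{2n-2a+1}{2(n-a+1)}f(a,a)$ of what is needed. Your fallback $f_B(1,0)\ge f(2,2)$ is false in general. For $n=8$ and $B=(2,4)$ one gets $f_B(1,0)=6435-1050=5385<5544=f(2,2)$, even though the only relevant value, $f(3,3)=5040$, is below $5385$.

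What works, and what the paper does, is the sharper bound $f(a,a)\le f(c+1,c+1)$ on the whole middle range. It follows from the symmetry and unimodality of $f$ on the diagonal together with $d-1\le n-c-1$. One then shows $H(c,d)=f_B(1,0)-f(c+1,c+1)>0$ in two moves:
\begin{itemize}
\item $H$ increases in $d$;
\item along $d=c+2$, $H$ increases in $c$, which follows from the sign of an explicit quadratic.
\end{itemize}
This reduces everything to $H(1,3)=\binom{2n-4}{n-1}\frac{n^2-8n+6}{n(n-2)}$, which is positive for $n\ge 8$. That is exactly where the hypothesis $n\ge 8$ enters, and your plan never locates it.
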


\begin{proof}
Without loss of generality, we assume $d\ge c+2$. As above, let $g_B(A)$ be the number of paths from $A$ to $(n,n)$ that avoid the point $B$. Let $A_1=(a,a+1)$ and $A_1'=(a+1,a)$.

If $a\le c$, then
\begin{align*}
 f_B((1,1),A_1) &= 2\tbinom{2a-1}{a} g_B(A_1) = \tbinom{2a}{a} g_B(A_1), \\
 f_B((1,1),A_1') &= 2\tbinom{2a-1}{a} g_B(A_1') = \tbinom{2a}{a} g_B(A_1'),
\end{align*}
and so $f_B(a,a) = \tbinom{2a}{a}\big[g_B(A_1)+g_B(A_1')\big] = f_B((1,1),A_1) + f_B((1,1),A_1') \le f_B(1,1)$.

If $a\ge d$, then a similar symmetric argument gives $f_B(a,a) \le f_B(n-1,n-1)$.

If $c<a<d$, then $f_B(a,a)=f(a,a)$. Moreover, $c+d\le n$ implies $d-1\le n-c-1$. Since $f(a,a)$ decreases for $a\in[0,\frac{n}{2}]$, and since $f(a,a)=f(n-a,n-a)$, we have
\[ f(a,a) \le f(c+1,c+1) \text{ for every } a\in[c+1,d-1].  \] 
We claim that $f(c+1,c+1)< f_B(1,0)$. To prove this, let $H(c,d) = f_{(c,d)}(I) - f(c+1,c+1)$ where $I=(1,0)$. Then,
\begin{align*}
 H(c,d+1) - H(c,d) &= f(I,(c,d)) - f(I,(c,d+1)) \\
 &= \binom{c+d-1}{c-1}\binom{2n-c-d}{n-c} -  \binom{c+d}{c-1}\binom{2n-c-d-1}{n-c} \\[3pt]
 &= \binom{c+d-1}{c-1}\binom{2n-c-d-1}{n-c}\left[\frac{2n-c-d}{n-d}-\frac{c+d}{d+1}\right] \\[3pt]
 &= \binom{c+d-1}{c-1}\binom{2n-c-d-1}{n-c}\left[\frac{n-c}{n-d}-\frac{c-1}{d+1}\right]>0
\end{align*}
since $n-c>n-d$ and $c-1<d+1$. Thus $H(c,d)>0$ if $H(c,c+2)>0$.

Now let $\Delta(c) = H(c+1,c+3)-H(c,c+2)$ with $2c+4\le n$. If $m=n-c$, then
\begin{align*}
 \Delta(c) &=  f(I,(c,c+2))-f(I,(c+1,c+3)) + f(c+1,c+1) - f(c+2,c+2) \\[5pt]
 &=\text{\footnotesize $\binom{2c+1}{c+2}\binom{2m-2}{m-2} - \binom{2c+4}{c+2}\binom{2m-4}{m-2}
 + \binom{2c+2}{c+1}\binom{2m-2}{m-1} - \binom{2c+3}{c}\binom{2m-4}{m-1}$} \\[5pt]
 &=\binom{2c+1}{c+2}\binom{2m-4}{m-1}\frac{6\,Q(c,m)}{c(c+3)m(m-2)},
\end{align*}
where $Q(c,m) = (c+1)m^2-(c^2+\tfrac{17}{3}c+4)m+c^2+3c$. It can be easily checked that if $n\ge 6$ and $2c+4\le n$, then 
\[ m\ge c+4 > \frac{c^2+\tfrac{17}{3}c+4 + \sqrt{(c^2+\tfrac{17}{3}c+4)^2-4(c+1)(c^2+3c)}}{2(c+1)}, \]
and therefore $Q(c,m)>0$. Thus $\Delta(c)>0$, which implies $H(c,c+2)\ge H(1,3)$.

Finally, for $H(1,3) = f_{(1,3)}(I) - f(2,2)$ we have
\begin{align*}
 H(1,3) &= \binom{2n-1}{n-1} - \binom{2n-4}{n-1} - \binom{4}{2}\binom{2n-4}{n-2} \\
 &= \binom{2n-4}{n-1}\left[\frac{2(2n-1)(2n-3)}{n(n-2)}-1-\frac{6(n-1)}{n-2}\right] \\
 &= \binom{2n-4}{n-1}\frac{n^2-8n+6}{n(n-2)} > 0 \;\text{ if } n\ge 8. 
\end{align*}
In conclusion, if $d\ge c+2$ and $n\ge 8$, we have $H(c,d)>0$ and so $f(c+1,c+1)<f_B(1,0)$. The case $c\ge d+2$ follows by interchanging the roles of $c$ and $d$ (equivalently, reflecting across the main diagonal), yielding $f(d+1,d+1)<f_B(0,1)$.
\end{proof}

\begin{theorem}
If $n\ge 8$ and $B=(c,d)\in\R_2(n)$, then
\[ f_B(A) \le \max\{f_B(1,0),f_B(0,1),f_B(1,1),f_B(n-1,n),f_B(n,n-1),f_B(n-1,n-1)\} \]
for every point $A\in\Grid(n)$ different from $(0,0)$ and $(n,n)$.
\end{theorem}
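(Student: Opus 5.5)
The plan is to assemble the theorem from the lemmas of this section, using the symmetries \eqref{eq:symmetry} to reduce to a normalized position of $B$. Let $M$ denote the right-hand maximum. The six candidate points form a set that is invariant under both reflections $A\mapsto A'$ and $A\mapsto\tilde A$, so $M$ is invariant under the corresponding transformations of $B$.

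\textbf{Reductions.} If $B$ lies on the boundary of $\Grid(n)$, Lemma~\ref{lem:axes} already gives the result; for instance, $B'$ in part $(i)$ is one of the six points. We may therefore assume $c,d\ge1$ and $c,d\le n-1$. If $c+d>n$, we replace $(A,B)$ by $(\tilde A,\tilde B)$, using the second identity in \eqref{eq:symmetry}; then $\tilde B\in\R_2(n)$ and $\tilde c+\tilde d<n$. Thus we may assume $c+d\le n$, which is exactly the hypothesis of Lemmas~\ref{lem:A_in_R2}--\ref{lem:Diagonal}. By the first identity we may also assume $d\ge c+2$, although the lemmas already cover both orientations.

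\textbf{Case split on $A$.} We use $\Grid(n)\setminus\{(0,0),(n,n)\}=\R_0\cup\R_1\cup\R_2$.
\begin{enumerate}[$(1)$]
\item If $A\in\R_2(n)$, Lemma~\ref{lem:A_in_R2} gives $f_B(A)\le f_B(1,1)\le M$.
\item If $A\in\R_1(n)$, then $A=(a,a-1)$ or $A=(a-1,a)$ for some $1\le a\le n$. Lemma~\ref{lem:subDiagonal}$(i)$ (or $(ii)$ if $c\ge d+2$) bounds both by $f_B(1,0)$ (respectively $f_B(0,1)$), hence by $M$.
\item If $A=(a,a)\in\R_0(n)$, Lemma~\ref{lem:Diagonal} bounds $f_B(a,a)$ by $f_B(1,1)$ when $a\le\min\{c,d\}$, by $f_B(n-1,n-1)$ when $a\ge\max\{c,d\}$, and by $\max\{f_B(1,0),f_B(0,1)\}$ in the middle range. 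The middle range is where $n\ge 8$ is needed.
\end{enumerate}

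\textbf{Main obstacle.} The main point needing care is the reduction step rather than the estimates, since those are all in the lemmas. We must check that under the reflection $A\mapsto\tilde A$ the set of six candidate points maps to itself, namely $(1,0)\leftrightarrow(n-1,n)$, $(0,1)\leftrightarrow(n,n-1)$, and $(1,1)\leftrightarrow(n-1,n-1)$. We must also check that $\tilde B$ stays in $\R_2(n)$ with $1\le\tilde c,\tilde d$. Finally, the boundary cases $c=0$ or $d=0$ (and $c=n$ or $d=n$) need to be routed through Lemma~\ref{lem:axes}, including part $(i)$ when $B=(0,1)$ is excluded automatically since $|c-d|\ge2$.
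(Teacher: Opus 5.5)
Your proposal is correct and is essentially the paper's own proof. Boundary obstructions go to Lemma~\ref{lem:axes}, the reflection $(x,y)\mapsto(n-x,n-y)$ reduces to $c+d\le n$, and Lemmas~\ref{lem:A_in_R2}, \ref{lem:subDiagonal}, \ref{lem:Diagonal} handle $A\in\R_2$, $\R_1$, $\R_0$ respectively; you route $c=n$ or $d=n$ explicitly through Lemma~\ref{lem:axes}$(iii)$ and check that the six candidate points are permuted by the reflection, which the paper's terse proof leaves implicit.
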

\begin{proof}
If $c=0$ or $d=0$, the result follows from Lemma~\ref{lem:axes}. Otherwise, by the symmetry \eqref{eq:symmetry} we may assume $c+d\le n$. Lemmas~\ref{lem:A_in_R2}, \ref{lem:subDiagonal}, and \ref{lem:Diagonal} then bound $f_B(A)$ by the right-hand side according to whether the point $A$ is in $\R_2$, $\R_1$, or $\R_0$, respectively.
\end{proof}

\section{Obstructions along the subdiagonals $\R_1$}
\label{sec:subdiagonals}

We now consider the case when the obstruction $B$ is in $\R_1(n) = \{(x,y)\in\Grid(n): |x-y| = 1\}$.

\begin{lemma} \label{lem:R1_A_in_R2}
Let $c\ge 1$ be such that $2c+1\le n$. Further, let $B=(c,c+1)$ and $B'=(c+1,c)$. If $A\in\R_2(n)$, then $f_B(A) < f_B(1,0)$ and $f_{B'}(A) < f_{B'}(0,1)$.
\end{lemma}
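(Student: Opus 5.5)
The plan is to mimic the proof of Lemma~\ref{lem:A_in_R2}: bound $f_B(A)$ from above by the global bound of Lemma~\ref{lem:thirdTotal}, and bound $f_B(1,0)$ from below by a quantity that beats it. The second claim follows from the first by the reflection symmetry \eqref{eq:symmetry} about $y=x$. That reflection sends $B=(c,c+1)$ to $B'=(c+1,c)$, sends $(1,0)$ to $(0,1)$, and maps $\R_2(n)$ onto itself. So we only treat $B=(c,c+1)$.

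\emph{Upper bound for $f_B(A)$.} Write $M=\binom{2n-2}{n-1}$. For $A\in\R_2(n)$, Lemma~\ref{lem:thirdTotal} gives
\[ f_B(A)\le f(A)<\tfrac13\tbinom{2n}{n}=\tfrac{2(2n-1)}{3n}\,M. \]

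\emph{Lower bound for $f_B(1,0)$.} With $I=(1,0)$ we have $f_B(I)=f(I)-f(I,B)$, where $f(I)=\binom{2n-1}{n-1}=\frac{2n-1}{n}M$ and
\[ f(I,B)=g(c):=\tbinom{2c}{c-1}\tbinom{2n-2c-1}{n-c}. \]
The key step is to show that $g$ is decreasing in $c$ on the range $1\le c\le (n-1)/2$. For this I will compute the ratio of consecutive terms,
\[ \frac{g(c+1)}{g(c)}=\frac{2(2c+1)}{c+2}\cdot\frac{n-c}{2(2n-2c-1)}=\frac{(2c+1)(n-c)}{(c+2)(2n-2c-1)}. \]
Cross-multiplying, the numerator minus the denominator equals $4c+2-3n$. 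This is negative because $4c+2\le 2n<3n$ when $2c+1\le n$. Hence $g(c)\le g(1)=\binom{2n-3}{n-1}=\tfrac12 M$, using $M=\binom{2n-3}{n-1}+\binom{2n-3}{n-2}=2\binom{2n-3}{n-1}$. Therefore
\[ f_B(1,0)\ge\Bigl(\tfrac{2n-1}{n}-\tfrac12\Bigr)M. \]

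\emph{Comparison.} It remains to check that $\frac{2n-1}{n}-\frac12\ge\frac{2(2n-1)}{3n}$. This is equivalent to $\frac{2n-1}{3n}\ge\frac12$, i.e.\ $n\ge2$, which holds since $n\ge 5$. Because the bound on $f(A)$ is strict, we get $f_B(A)<f_B(1,0)$.

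The only step requiring care is the monotonicity of $g(c)$, since a naive bound such as $f(I,B)\le \tfrac12 f(B)$ is too weak. I expect the ratio computation above to settle it. If the simplification turns out messier than expected, the fallback is to bound $g(c)$ directly by $g(1)$ through an injection of paths through $I$ and $(c,c+1)$ into paths through $I$ and $(1,2)$.
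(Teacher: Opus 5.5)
There is a genuine gap: the monotonicity of $g$ is false. Your overall structure matches the paper's: the bound $f_B(A)\le f(A)<\frac13\binom{2n}{n}$ from Lemma~\ref{lem:thirdTotal}, a lower bound for $f_B(1,0)=f(I)-f(I,B)$, and reflection to handle $B'$. The problem is in the ratio of the first factors. $\binom{2c+2}{c}\big/\binom{2c}{c-1}$ equals $\frac{2(c+1)(2c+1)}{c(c+2)}$, not $\frac{2(2c+1)}{c+2}$; you dropped a factor $\frac{c+1}{c}$. The correct ratio is
\[ \frac{g(c+1)}{g(c)}=\frac{(c+1)(2c+1)(n-c)}{c(c+2)(2n-2c-1)}, \]
and numerator minus denominator is $(1-c)n+2c^2+c$, which equals $3$ at $c=1$. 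So $g(2)=\frac{2(n-1)}{2n-3}\,g(1)>g(1)$ for every $n\ge 5$.

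Two concrete checks:
\begin{itemize}
\item At $n=5$: $M=70$ and $g(1)=35$, but $g(2)=\binom41\binom53=40$. Your bound $f_B(1,0)\ge(\frac{2n-1}{n}-\frac12)M$ predicts $91$ for $c=2$, while the true value is $126-40=86$.
\item At $n=9$: the values $g(1),\dots,g(4)$ are $6435<6864<6930<7056$, with $\frac12 M=6435$.
\end{itemize}
Hence $g(c)\le\frac12 M$ fails. Your fallback injection of paths through $I$ and $(c,c+1)$ into paths through $I$ and $(1,2)$ cannot exist either.

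What you actually need is only the weaker inequality $g(c)<\frac13 f(I)=\frac{2n-1}{3n}M$. That is true, but it has to be proved without a global monotonicity claim, since $g$ has increasing stretches whose location depends on $n$.

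The paper does this by comparing points on the same antidiagonal. The points $B$, $B'$, $B_1=(c-1,c+2)$ and $B_1'=(c+2,c-1)$ all lie on $x+y=2c+1$, so a path meets at most one of them. Therefore $f(I)\ge f(I,B)+f(I,B')+f(I,B_1)+f(I,B_1')$. Explicit ratios then give
\[ f(I,B_1)+f(I,B')+f(I,B_1')=\Bigl[\tfrac{(2c+1)(n-c-1)}{(c+2)(n-c+1)}+\tfrac{c+1}{c}\Bigr]f(I,B)>2f(I,B) \]
whenever $n\ge 2c+1$. This yields $f(I,B)<\frac13 f(I)$, so $f_B(1,0)>\frac23 f(I)=\frac13\binom{2n}{n}$. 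With this replacement for your monotonicity step, the rest of your argument goes through unchanged.
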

\begin{proof}
Let $I=(1,0)$, $B_1=(c-1,c+2)$, and $B_1'=(c+2,c-1)$. The four points $B$, $B'$, $B_1$, $B_1'$ all lie on the antidiagonal $x+y=2c+1$, so any path from $(0,0)$ to $(n,n)$ passes through at most one of them. Since $f(I,B)=\binom{2c}{c-1}\binom{2n-2c-1}{n-c}$, we get
\begin{align*}
  f(I,B_1) &=\binom{2c}{c-2}\binom{2n-2c-1}{n-c+1} = \frac{(c-1)(n-c-1)}{(c+2)(n-c+1)}f(I,B), \\
  f(I,B') &=\binom{2c}{c}\binom{2n-2c-1}{n-c} = \frac{c+1}{c}f(I,B), \\
  f(I,B_1') &=\binom{2c}{c-1}\binom{2n-2c-1}{n-c+1} = \frac{n-c-1}{n-c+1}f(I,B),
\end{align*}
and so
\[ f(I,B_1)+f(I,B')+f(I,B_1') = \left[\frac{(2c+1)(n-c-1)}{(c+2)(n-c+1)} + \frac{c+1}{c} \right] f(I,B) > 2 f(I,B). \]
To verify the inequality, note that
\begin{align*}
  \frac{(2c+1)(n-c-1)}{(c+2)(n-c+1)} + \frac{c+1}{c} -2 
  &= \frac{(c^2+2)n-c^3-3c^2-4c+2}{(c+2)(n-c+1)}.
\end{align*}
The numerator is increasing in $n$ (with coefficient $c^2+2>0$), and at $n=2c+1$ it equals $c^3-2c^2+4$, which is at least $3$ for $c\ge 1$. Hence the entire expression is positive whenever $n\ge 2c+1$.

The above estimate for $f(I,B)$ then implies
\[ f(I)\ge f(I,B)+f(I,B_1)+f(I,B')+f(I,B_1') > 3f(I,B), \] 
which gives $f(I,B)<\tfrac13 f(I)$ and therefore $f_B(I) = f(I) - f(I,B) > \frac23 f(I) = \tfrac{2}{3}\binom{2n-1}{n-1}$.

Finally, by Lemma~\ref{lem:thirdTotal} applied to $A\in\R_2(n)$,
\[ f_B(A)\le f(A) < \frac13 \binom{2n}{n} = \frac{2}{3}\binom{2n-1}{n-1} < f_B(I)=f_B(1,0). \]
The inequality $f_{B'}(A) < f_{B'}(0,1)$ follows by a symmetric argument.
\end{proof}

\begin{lemma} \label{lem:R1_A_in_R1}
Let $c\ge 2$ be such that $2c+1\le n$. Let $B=(c,c+1)$, $B'=(c+1,c)$, $A=(a,a-1)$, and $A'=(a-1,a)$. If $1\le a\le n$ and $n\ge 9$, then
\[ f_B(A')\le f_B(A) \le f_B(1,0) \;\text{ and }\; f_{B'}(A)\le f_{B'}(A') \le f_{B'}(0,1). \]
$($The case $c=1$ will be treated separately in Theorem~\ref{thm:Spoints}.$)$
\end{lemma}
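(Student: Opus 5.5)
By the reflection $(x,y)\mapsto(y,x)$ in \eqref{eq:symmetry}, which swaps $B$ and $B'$ and also $A$ and $A'$, the second chain follows from the first. So I would only prove $f_B(A')\le f_B(A)\le f_B(1,0)$ for $B=(c,c+1)$, following the three-case structure of Lemma~\ref{lem:subDiagonal}(i). I will point out where the hypothesis $d\ge c+2$ was really used there.

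\emph{The first inequality.} Since $f(A)=f(A')$, we have $f_B(A')\le f_B(A)$ if and only if $f(A',B)\ge f(A,B)$. If $a\le c$, this is the binomial comparison $\binom{2c-2a+2}{c-a}\le\binom{2c-2a+2}{c-a+1}$ from Case~1, which only needs $d\ge c+1$. If $a-1\ge c+1$, the analogous comparison holds for the segments after $B$, since $B$ lies above the diagonal. For $a\in\{c+1\}$, neither $A$ nor $A'$ is comparable with $B$ in a way that allows both to meet it: $A=(c+1,c)$ can never lie on a path through $B$, while $A'=(c,c+1)=B$ gives $f_B(A')=0$. So the inequality is trivial there.

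\emph{The second inequality for $a\le c$ and for $a-1\ge c+1$.} For $a\le c$, Case~1 of Lemma~\ref{lem:subDiagonal} applies verbatim. The decomposition through $A_1,A_1',A_2$ and the coefficient comparison with $I=(1,0)$ never used $d\ge c+2$. For $a\ge c+2$, the reflected argument gives $f_B(A)\le f_B(n,n-1)$. I would then reuse the Case~2 ratio $\frac{f((n,n-1),B)}{f(I,B)}=\frac{(c+d)(n-d)}{c(2n-c-d)}$. With $d=c+1$ the required inequality reduces to $(d-c)(n-c-d)=n-2c-1\ge0$, which is exactly the hypothesis $2c+1\le n$.

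\emph{The middle case $a=c+1$.} Here $A=(c+1,c)=B'$, so this is the main obstacle. Case~3 of the earlier lemma was vacuous-free only because $c<a<d$ allowed a disjoint decomposition. Now $A$ is incomparable with $B$, so $f_B(A)=f(B')=\binom{2c+1}{c}\binom{2n-2c-1}{n-c}$, and we must show $f(B')<f(I)-f(I,B)$. By \eqref{eq:throughA}, $f(B')=f(I,(c+1,c+1))+f(I,(c+2,c))$. Hence it suffices to exhibit enough $B$-avoiding paths through $I$ that avoid both of those points, beyond those that do pass through $B$. Concretely, I would compute
\[
D(c,n)=\binom{2n-1}{n-1}-\binom{2c}{c-1}\binom{2n-2c-1}{n-c}-\binom{2c+1}{c}\binom{2n-2c-1}{n-c},
\]
factor out $\binom{2c}{c-1}\binom{2n-2c-1}{n-c}$, and study the resulting rational expression. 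The plan is to show the analogue of the $H$-monotonicity in Lemma~\ref{lem:Diagonal}. I expect $D$ to be increasing in $n$ for fixed $c$, or equivalently the ratio $\binom{2n-1}{n-1}/f(I,B)$ to grow. I then need to check the boundary $n=2c+1$ (with $c\ge2$) together with small $c$ against $n\ge9$. The boundary $n=2c+1$ is the antidiagonal case, where $B$ and $B'$ are symmetric. There the bound should follow from $f(I)\ge f(I,B)+f(I,B')+f(I,B_1)+f(I,B_1')$ and the estimates already in Lemma~\ref{lem:R1_A_in_R2}. The case $c=1$ fails, since then $B'=(2,1)$ beats $(1,0)$, which is why it is excluded. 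I expect that checking the small $c$ (say $c=2,3$) requires $n\ge9$ explicitly, via a quadratic inequality in $n$ like $n^2-8n+6>0$ in Lemma~\ref{lem:Diagonal}.
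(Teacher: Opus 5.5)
Your treatment of the first inequality and of the cases $a\le c$ and $a\ge c+2$ is correct, and your case split is sharper than the paper's. The paper sends $a<c$ to Case~1 and $a\ge c+1$ to Case~2 of Lemma~\ref{lem:subDiagonal}, and treats $a=c$ separately through $f_B(c,c-1)<f(c+1,c)<f_B(1,0)$. You observe two things the paper's text misses:
\begin{itemize}
\item Case~1 already covers $a=c$, since there $A_1=B$ and so $g_B(A_1)=f_B(I,A_1)=0$.
\item The case $a=c+1$ is not a Case~2 situation. There $f_B(A)=f(c+1,c)$, and the Case~2 intermediate bound $f_B(A)\le f_B(n,n-1)$ actually fails: for $n=9$, $c=2$ one has $f(3,2)=17160>16390=f_B(9,8)$.
\end{itemize}
So the lemma really rests on the single inequality $H(c)=f_{(c,c+1)}(1,0)-f(c+1,c)>0$, which is your $D(c,n)$. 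The paper proves this inside its $a=c$ paragraph, and that is what genuinely covers $a=c+1$.

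For this crux your text is still only a plan, and two details in it need fixing.
\begin{itemize}
\item The base point must be $n_0=\max\{2c+1,9\}$, not $n=2c+1$. Indeed $H<0$ at $(c,n)=(2,5)$ and $(3,7)$.
\item The estimate $f(I,B)<\frac13 f(I)$ from Lemma~\ref{lem:R1_A_in_R2} is too weak. Since $f((0,1),B')=f(I,B)$, you need $f(I)-f(I,B)>\frac{2c+1}{c}f(I,B)$, but that lemma only yields $>2f(I,B)$.
\end{itemize}

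With these corrections your route does close. Monotonicity in $n$ holds for the following reason. $H>0$ iff $\binom{2n-1}{n-1}/\binom{2n-2c-1}{n-c}>\frac{3c+1}{c}\binom{2c}{c-1}$. The left side at $n+1$ divided by its value at $n$ equals $\frac{(2n+1)(n-c+1)}{(n+1)(2n-2c+1)}$, which exceeds $1$ because the numerator minus the denominator is exactly $c$.

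For the base case, use the explicit ratio from that lemma's proof, $f(I,B_1)+f(I,B_1')=\frac{(2c+1)(n-c-1)}{(c+2)(n-c+1)}f(I,B)$. This is at least $f(I,B)$ iff $(c-1)(n-c)\ge 3(c+1)$. That condition holds for all $n\ge 2c+1$ when $c\ge 4$, and for $n\ge 9$ when $c=3$. In both cases the extra antidiagonal point $B_2=(c-2,c+3)$ contributes $f(I,B_2)>0$, which makes the inequality strict. What remains is only $c=2$, $n=9$, where $H=286>0$.

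Compared with the paper, which fixes $n$, shows $H(c+1)>H(c)$ for $2c+3\le n$, and reduces everything to the single quadratic $n^2-9n+6>0$ coming from $H(2)$, your route is more combinatorial. It avoids the somewhat opaque difference computation but needs this small residual check.
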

\begin{proof}
We only consider the case for $B$; the argument for $B'$ is symmetric. For $a<c$ or $a\ge c+1$, the inequalities can be shown following the same strategy as in Lemma~\ref{lem:subDiagonal} (Cases 1 and 2 of that proof, respectively). Note that when $a=c+1$, we have $A'=B$, hence $f_B(A')=0$ and the first inequality is trivial.

Suppose now $a=c$. There are more paths passing through $A'=(c-1,c)$ and $B$ than through $A=(c,c-1)$ and $B$, so $f_B(A')\le f_B(A)$. Moreover,
\begin{align*}
 f_B(c+1,c)&= \binom{2c+1}{c}\binom{2n-2c-1}{n-c} \\ 
 &= \frac{4c+2}{c+1}\binom{2c-1}{c}\binom{2n-2c-1}{n-c}
 > 3\,\binom{2c-1}{c}\binom{2n-2c-1}{n-c}
\end{align*}
since $c\ge 2$, and in addition,
\begin{align*}
 f_B(c,c-1) &=  \binom{2c-1}{c}\left[\binom{2n-2c-1}{n-c} +  \binom{2n-2c}{n-c+1}\right] \\
 &= \left[1+\frac{2(n-c)}{n-c+1}\right] \binom{2c-1}{c}\binom{2n-2c-1}{n-c} < 3\,\binom{2c-1}{c}\binom{2n-2c-1}{n-c}.
\end{align*}
Therefore, $f_B(A) = f_B(c,c-1) < f_B(c+1,c) = f(c+1,c)$. We claim that $f(c+1,c)<f_B(1,0)$.

Following the strategy of Lemma~\ref{lem:Diagonal}, let $I=(1,0)$ and consider
\[ H(c) = f_{(c,c+1)}(I) - f(c+1,c) \,\text{ and }\, \Delta(c) = H(c+1)-H(c) \text{ with } 2c+3\le n. \]
If $m=n-c$, then
\begin{align*}
 \Delta(c) &=  f(I,(c,c+1))-f(I,(c+1,c+2)) + f(c+1,c) - f(c+2,c+1) \\[5pt]
 &=\text{\footnotesize $\binom{2c}{c-1}\binom{2m-1}{m} - \binom{2c+2}{c}\binom{2m-3}{m-1}
 + \binom{2c+1}{c}\binom{2m-1}{m} - \binom{2c+3}{c+1}\binom{2m-3}{m-1}$} \\[5pt]
 &=2\binom{2c}{c-1}\binom{2m-3}{m-1}\frac{3cm-3c^2-7c-2}{c(c+2)m}.
\end{align*}
Since $2c+3\le n$, we have $m\ge c+3$, and so $3cm-3c^2-7c-2\ge 2c-2>0$ (using $c\ge 2$). This implies $\Delta(c)>0$ and therefore $H(c)\ge H(2) = f_{(2,3)}(I) - f(3,2)$. Finally,
\begin{align*}
 H(2) &= \binom{2n-1}{n-1} - \binom{4}{1}\binom{2n-5}{n-2} - \binom{5}{2}\binom{2n-5}{n-3} \\
 &= 2\binom{2n-5}{n-2}\frac{n^2-9n+6}{n(n-1)} > 0 \;\text{ if } n\ge 9.
\end{align*}
In conclusion, for $n\ge 9$ we have $H(c)>0$ and so $f(c+1,c)<f_B(1,0)$.
\end{proof}

\begin{lemma} \label{lem:R1_A_in_R0}
Let $c\ge 1$ with $2c+1\le n$, and let $B\in\{(c,c+1),(c+1,c)\}$. If $1\le a \le n-1$, then $f_B(a,a) \le f_B(n-1,n-1)$.
\end{lemma}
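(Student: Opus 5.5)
The plan is to split according to whether the diagonal point $(a,a)$ comes before or after $B$ along a monotone path. The after-case is handled by a path decomposition, as in Lemma~\ref{lem:Diagonal}. The before-case reduces to $f_B(1,1)$ and then to a single binomial comparison. By the symmetry \eqref{eq:symmetry} (reflection about $y=x$) it suffices to treat $B=(c,c+1)$: the reflection swaps $B$ and $B'=(c+1,c)$ and fixes both $(a,a)$ and $(n-1,n-1)$. Let $h_B(P)$ be the number of $B$-avoiding paths from $(0,0)$ to $P$.

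\emph{Case $a\ge c+1$.} No path segment starting at $(a-1,a)$ or $(a,a-1)$ can hit $B$, since its coordinate sum is at least $2a-1\ge 2c+1=c+d$, with equality only when the starting point is $B$ itself. Hence
\[ f_B(a,a)=\big[h_B(a-1,a)+h_B(a,a-1)\big]\binom{2n-2a}{n-a}. \]
Paths from $(a-1,a)$ to $(n,n)$ through $(n-1,n-1)$ number $2\binom{2n-2a-1}{n-a}=\binom{2n-2a}{n-a}$, and likewise from $(a,a-1)$. So $f_B(a,a)=f_B((a-1,a),(n-1,n-1))+f_B((a,a-1),(n-1,n-1))$. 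These two classes are disjoint, so $f_B(a,a)\le f_B(n-1,n-1)$. When $a=c+1$ we have $(a-1,a)=B$ and $h_B(B)=0$, so this is consistent.

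\emph{Case $a\le c=\min\{c,c+1\}$.} The first part of the proof of Lemma~\ref{lem:Diagonal} (the decomposition through $(a,a+1)$ and $(a+1,a)$) used only $a\le\min\{c,d\}$. It therefore gives $f_B(a,a)\le f_B(1,1)$ here as well. It remains to show $f_B(1,1)\le f_B(n-1,n-1)$. Since $f(1,1)=f(n-1,n-1)$, this is equivalent to $f((1,1),B)\ge f(B,(n-1,n-1))$. Explicitly, the claim is
\[ 2\binom{2c-1}{c-1}\binom{2n-2c-1}{n-c}\ \ge\ 2\binom{2c+1}{c}\binom{2n-2c-3}{n-c-1}. \]

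With $k=n-c$, the ratio of the left side to the right side is
\[ \frac{c+1}{2(2c+1)}\cdot\frac{2(2k-1)}{k}=\frac{(c+1)(2k-1)}{(2c+1)k}. \]
This ratio is at least $1$ exactly when $k\ge c+1$, that is, when $n\ge 2c+1$, which is the hypothesis. The only real work is this final ratio computation together with the check that the Lemma~\ref{lem:Diagonal} decomposition still applies when $d=c+1$. I expect the latter to be the main point to verify carefully: one must confirm that neither $(a,a+1)$ nor $(a+1,a)$ equals $B$ in a way that breaks the identity. When $a=c$, $(c,c+1)$ is $B$ and contributes $0$ on both sides, so the identity still holds.
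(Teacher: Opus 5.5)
Your proof is correct, and it is organized differently from the paper's. The paper splits into three cases.
\begin{itemize}
\item For $a<c$ it compares $(a,a)$ directly with $J=(n-1,n-1)$. It decomposes along the antidiagonal $x+y=2n-2$ (through $(n-2,n)$, $J$, $(n,n-2)$) and matches these terms against paths through $(a-1,a+1)$ and $(a+1,a-1)$ using binomial inequalities.
\item For $a>c$ it mirrors that argument to reach $(1,1)$, then uses the same ratio $f((1,1),B)\ge f(B,J)$ that you compute.
\item For $a=c$ it needs a separate monotonicity argument: $H(c)=f_B(J)-f_B(c,c)$ increases in $c$ and $H(1)\ge 0$.
\end{itemize}

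You instead compare each diagonal point with the anchor on its own side of $B$. Points before $B$ go to $(1,1)$ via the next-step decomposition, and points after $B$ go to $J$ via the previous-step decomposition. Both are exact identities plus disjointness, so the only estimate left is the single ratio $\frac{(c+1)(2k-1)}{(2c+1)k}\ge 1$. The case $a=c$ costs nothing, because the step from $(c,c)$ into $B$ contributes zero.

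In effect, you observe that since $d=c+1$, the two structural branches of Lemma~\ref{lem:Diagonal} already cover every $a$. Those branches never use $|c-d|\ge 2$, and the middle range $\min\{c,d\}<a<\max\{c,d\}$ is empty here. This makes the paper's Case~1 binomial comparisons and Case~3 induction unnecessary. Your route is shorter, and the hypothesis $2c+1\le n$ enters only at the final ratio, where it is sharp (equality at $n=2c+1$).
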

\begin{proof}
Without loss of generality, we assume $B=(c,c+1)$. Let 
\begin{alignat*}{3} 
 A&=(a,a), & A_1&=(a-1,a+1),\quad & A_1'&=(a+1,a-1), \\
 J&=(n-1,n-1),\quad & J_1&=(n-2,n), & J_1'&=(n,n-2).
\end{alignat*}
Clearly, $f_B(A) = f_B(A,J_1) + f_B(A,J) + f_B(A,J_1')$, and moreover,
\[ f_B(J)\ge f_B(A_1,J) + f_B(A,J) + f_B(A_1',J). \] 
\smallskip
\noindent{\em Case 1:} $a<c$. We have
\begin{align*}
 f_B(A,J_1) &= \binom{2a}{a}\left[\binom{2n-2a-2}{n-a} - \binom{2c-2a+1}{c-a}\binom{2n-2c-3}{n-c-1}\right], \\
 f_B(A_1, J) &= 2\binom{2a}{a-1}\left[\binom{2n-2a-2}{n-a} - \binom{2c-2a+1}{c-a}\binom{2n-2c-3}{n-c-1}\right].
\end{align*}
Since $\binom{2a}{a} = \frac{a+1}{a}\binom{2a}{a-1}\le 2\binom{2a}{a-1}$, we get $f_B(A,J_1)\le f_B(A_1, J)$. Similarly,
\begin{align*}
 f_B(A,J_1') &= \binom{2a}{a}\left[\binom{2n-2a-2}{n-a} - \binom{2c-2a+1}{c-a}\binom{2n-2c-3}{n-c}\right], \\
 f_B(A_1', J) &= 2\binom{2a}{a-1}\left[\binom{2n-2a-2}{n-a} - \binom{2c-2a+1}{c-a-1}\binom{2n-2c-3}{n-c-1}\right],
\end{align*}
and since $\binom{2c-2a+1}{c-a-1}\binom{2n-2c-3}{n-c-1}<\binom{2c-2a+1}{c-a}\binom{2n-2c-3}{n-c}$, we obtain $f_B(A,J_1')\le f_B(A_1', J)$. Combining the inequalities, we get $f_B(A) \le f_B(J)$.

\smallskip
\noindent{\em Case 2:} $a>c$. A symmetric argument gives $f_B(A) \le f_B(1,1)$. To finish this case, we show that $f_B(1,1)\le f_B(J)$ whenever $n\ge 2c+1$. Since $f(1,1) = f(n-1,n-1)$, we have
\[ f_B(J) - f_B(1,1) = f((1,1),B) - f(J,B), \]
and a direct computation gives
\[ \frac{f(J,B)}{f((1,1),B)} = \frac{(2c+1)(n-c)}{(c+1)(2n-2c-1)}. \]
The inequality $(2c+1)(n-c) \le (c+1)(2n-2c-1)$ simplifies to $n-2c-1\ge 0$, which is exactly our hypothesis. Hence $f_B(1,1)\le f_B(J)$.

\smallskip
\noindent{\em Case 3:} $a=c$. Consider the function $H(c) = f_{(c,c+1)}(J) - f_{(c,c+1)}(c,c)$. Let $m=n-c$ and assume $2c+3\le n$. With routine manipulations, $H(c+1)-H(c)$ simplifies to
\begin{align*}
 H(c+1)-H(c) &= \binom{2c}{c} \binom{2m-1}{m} - 2\binom{2c+3}{c+1} \binom{2m-5}{m-2} \\
 &= 4\binom{2c}{c}\binom{2m-5}{m-2} \left[\frac{(2m-3)(2m-1)}{(m-1)m}-\frac{(2c+1)(2c+3)}{(c+1)(c+2)}\right].
\end{align*}
Since the function $h(x)=\frac{(2x-1)(2x+1)}{x(x+1)}$ is increasing for $x>0$, and since $m-1>c+1$, we conclude $H(c+1)-H(c)>0$. Thus, $H(c)\ge H(1)$ for $c$ with $2c+1\le n$, and
\begin{align*}
 H(1) &= 2\binom{2n-1}{n-1} - 2\binom{3}{1}\binom{2n-5}{n-2} - 2\binom{2n-3}{n-1} \\
 &= 2\binom{2n-5}{n-2}\frac{n-3}{n-1} \ge 0 \;\text{ since } n\ge 3. 
\end{align*}
In conclusion, $f_B(c,c) \le f_B(n-1,n-1)$, which completes the proof.
\end{proof}

\medskip
\begin{theorem} \label{thm:R1}
Let $B=(c,d)\in\R_1(n)$. If $n\ge 9$ and $3<c+d< 2n-3$, then
\[ f_B(A) \le \max\{f_B(1,0),f_B(0,1),f_B(1,1),f_B(n-1,n),f_B(n,n-1),f_B(n-1,n-1)\} \]
for every point $A\in \Grid(n)\backslash\{(0,0),(n,n)\}$.
\end{theorem}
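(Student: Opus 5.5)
The proof is an assembly of the three preceding lemmas, after reducing by symmetry to a normalized position of $B$.

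\emph{Reduction.} Since $B=(c,d)\in\R_1(n)$, we have $|c-d|=1$, so $c+d$ is odd and $c+d\neq n$ when $n$ is even. The antidiagonal reflection in \eqref{eq:symmetry} sends $B$ to $\tilde B=(n-c,n-d)$ and $A$ to $\tilde A$. It maps the set of six candidate points
\[\{(1,0),(0,1),(1,1),(n-1,n),(n,n-1),(n-1,n-1)\}\]
onto itself and preserves the regions $\R_0,\R_1,\R_2$. So we may assume $c+d\le n$, that is, $B\in\{(c,c+1),(c+1,c)\}$ with $2c+1\le n$. The hypothesis $3<c+d$ then forces $c\ge 2$.

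If $c+d>n$, then $\tilde B$ satisfies $(n-c)+(n-d)=2n-c-d<n$. The hypothesis $c+d<2n-3$ gives $2n-c-d>3$, so $\tilde B$ again has its smaller coordinate at least $2$. Any bound $f_{\tilde B}(\tilde A)\le \max\{\cdots\}$ over the six $\tilde B$-values transfers to $B$, because the reflection permutes the six candidates. The case $c+d=n$ (possible only for odd $n$) is already covered by $2c+1\le n$. The diagonal reflection $(x,y)\mapsto(y,x)$ likewise permutes the six points, which lets us treat only $B=(c,c+1)$ when convenient. The lemmas, however, already state both orientations.

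\emph{Case analysis on $A$.} Fix $B=(c,c+1)$ or $B=(c+1,c)$ with $c\ge 2$ and $2c+1\le n$, and let $A\in\Grid(n)\setminus\{(0,0),(n,n)\}$. We split according to the region containing $A$:
\begin{itemize}
\item If $A\in\R_2(n)$, Lemma~\ref{lem:R1_A_in_R2} gives $f_B(A)<f_B(1,0)$ for $B=(c,c+1)$, and $f_{B}(A)<f_{B}(0,1)$ for $B=(c+1,c)$.
\item If $A\in\R_1(n)$, write $A=(a,a-1)$ or $A=(a-1,a)$ with $1\le a\le n$. Since $c\ge2$ and $n\ge 9$, Lemma~\ref{lem:R1_A_in_R1} bounds $f_B(A)$ by $f_B(1,0)$ or $f_B(0,1)$, according to the orientation of $B$.
\item If $A\in\R_0(n)$, then $A=(a,a)$ with $1\le a\le n-1$, and Lemma~\ref{lem:R1_A_in_R0} gives $f_B(a,a)\le f_B(n-1,n-1)$.
\end{itemize}
In every case $f_B(A)$ is at most one of the six listed values, which proves the theorem.

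\emph{Main obstacle.} There is no real difficulty left here; the work was done in the lemmas. The step needing care is the bookkeeping in the reduction. One must check that the excluded positions $c+d\le 3$ and $c+d\ge 2n-3$ correspond exactly to $c=1$, which Lemma~\ref{lem:R1_A_in_R1} does not cover, and its reflected image. One must also check that the reflection keeps $\R_1$ invariant and permutes the six candidate points, so that the maximum on the right-hand side is itself invariant.
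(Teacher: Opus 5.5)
Your proposal is correct and is essentially the paper's proof. You use the symmetry $(x,y)\mapsto(n-x,n-y)$ to reduce to $c+d\le n$, which gives $c\ge 2$ and $2c+1\le n$, and then apply Lemmas~\ref{lem:R1_A_in_R2}, \ref{lem:R1_A_in_R1}, and \ref{lem:R1_A_in_R0} according to whether $A$ lies in $\R_2$, $\R_1$, or $\R_0$; your extra bookkeeping (the reflection permutes the six candidate points, and $c+d<2n-3$ keeps the reflected obstruction away from the excluded case $c=1$) only spells out what the paper leaves implicit.
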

\begin{proof}
By the symmetry \eqref{eq:symmetry}, we may assume $c+d\le n$, so $B=(c,c+1)$ with $c\ge 2$ (the hypothesis $c+d>3$ excludes $c=1$) and $2c+1\le n$. Lemmas~\ref{lem:R1_A_in_R2}, \ref{lem:R1_A_in_R1}, and \ref{lem:R1_A_in_R0} bound $f_B(A)$ by an element of the right-hand side according to whether the point $A$ is in $\R_2$, $\R_1$, or $\R_0$, respectively.
\end{proof}

\begin{remark}
The constraint $n\ge 9$ in Theorem~\ref{thm:R1} appears precisely in Lemma~\ref{lem:R1_A_in_R1}: the inequality $H(2)>0$ requires $n^2-9n+6>0$, which holds (for integer $n$) only when $n\ge 9$. The corresponding results for $B\in\R_2$ and $B\in\R_0$ hold under weaker hypotheses ($n\ge 8$ and $n\ge 5$, respectively). The threshold $n\ge 9$ is also sharp at the level of the main theorem: at $n=8$, four additional maximizer locations appear (namely $(2,3), (3,2), (5,6), (6,5)$, as discussed in Section~\ref{sec:comments}).
\end{remark}

\medskip
We finish the section with the case when the obstruction is any of the points in the set 
\[ \mathcal{S}=\{(1,2),(2,1),(n-1,n-2),(n-2,n-1)\}. \]

\begin{theorem} \label{thm:Spoints}
Let $B=(c,d)\in \mathcal{S}$. If $A\in \Grid(n)\backslash\{(0,0),(n,n)\}$, then $f_B(A)\le f(d,c)$.
\end{theorem}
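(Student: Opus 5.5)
By the symmetry identities \eqref{eq:symmetry} it suffices to treat $B=(1,2)$, so the claim becomes $f_B(A)\le f(2,1)$. This right-hand side equals $f_B(2,1)$, because no monotone path passes through both $(1,2)$ and $(2,1)$. Throughout I would write
\[ C=\binom{2n-3}{n-2}, \qquad\text{so that}\qquad f(2,1)=3C. \]
The strategy is to compare every $f_B(A)$ with $3C$ by explicit binomial ratios, splitting $A$ according to the zones $\R_2,\R_1,\R_0$ as in the previous sections. Points near $(0,0)$ and near $(n,n)$ are handled separately, since only there does the obstruction matter.

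\emph{Points with $a+b\le 3$.} Every $B$-avoiding path meets the antidiagonal $x+y=3$ at exactly one of $(0,3),(2,1),(3,0)$, so I decompose accordingly.
\begin{itemize}
\item For $(1,0)$: $f_B(1,0)=f((1,0),(2,1))+f((1,0),(3,0))=2C+\binom{2n-3}{n-3}\le 3C$.
\item For $(0,1)$: $f_B(0,1)=C+\binom{2n-3}{n}\le 3C$.
\item For $(1,1)$: $f_B(1,1)=2C$.
\item For $(2,0)$ and $(0,2)$: similar direct counts give at most about $3C$; these are routine.
\item For $(3,0)$ and $(0,3)$: the ratio computation of Lemma~\ref{lem:thirdTotal} gives $f(3,0)<f(2,1)$.
\end{itemize}

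\emph{Points $A\in\R_2(n)$.} Lemma~\ref{lem:thirdTotal} gives $f_B(A)\le f(A)<\tfrac13\binom{2n}{n}$. Since $\binom{2n}{n}=\frac{4(2n-1)}{n}\,C$, this bound equals $\frac{4(2n-1)}{3n}C<\frac83 C<3C$, which settles this zone.

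\emph{Points $A\in\R_0\cup\R_1$ with $a+b\ge 4$.} Here I would use Kaplan's monotonicity along the lines $a=b$ and $|a-b|=1$: $f$ decreases toward the middle of the grid and is symmetric under $A\mapsto\tilde A$. This reduces the problem to a few extremal points near the two ends.
\begin{itemize}
\item \emph{Near $(0,0)$.} At the points $(2,2)$, $(3,2)$, $(2,3)$, the raw value $f$ can exceed $3C$; for instance $f(2,2)=6\binom{2n-4}{n-2}$. So the obstruction must be subtracted. Since
\[ f((2,2),B)=3\binom{2n-4}{n-2}, \qquad\text{we get}\qquad f_B(2,2)=3\binom{2n-4}{n-2}<3C. \]
Analogous subtraction handles $(3,2)$ and $(2,3)$. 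For points farther out toward the middle, the bound follows by monotonicity, or by the argument of Case~1 in Lemma~\ref{lem:R1_A_in_R0}.
\item \emph{Near $(n,n)$: $J=(n-1,n-1)$, $(n-1,n)$, $(n,n-1)$, $(n-2,n-1)$, $(n-1,n-2)$.} The obstruction removes only a small fraction here, so this is the delicate part. For $J$,
\[ f_B(J)=2\binom{2n-2}{n-1}-6\binom{2n-5}{n-2}, \]
and I must show this is at most $3C=\tfrac32\binom{2n-2}{n-1}$. This amounts to $6\binom{2n-5}{n-2}\ge\tfrac12\binom{2n-2}{n-1}$. Writing the ratio as a rational function of $n$, it tends to $\tfrac34$ against the required $\tfrac12$, so it should hold for all $n\ge5$ after clearing denominators.
\item The remaining near-$(n,n)$ points are handled the same way. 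Each has $f$ at most $f(1,1)=\tfrac43\cdot 3C$, and I subtract the paths through $B$.
\end{itemize}

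The main obstacle is this last family. For these points the unobstructed count exceeds $f(2,1)$ by a constant factor, so the inequality depends on a genuine polynomial estimate in $n$, not on soft bounds. The estimate must also be verified down to $n=5$, or the small cases checked directly.
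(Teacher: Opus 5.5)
\paragraph{Comparison with the paper.} Your route differs from the paper's. The paper does not split by zones. It cuts every $B$-avoiding path where it crosses $x+y=3$, writes $f_B(A)=f((0,3),A)+f((2,1),A)+f((3,0),A)$, and then absorbs the two outer terms into $f((2,1),A_1)+f((2,1),A_2)$ for suitably chosen points $A_1,A_2$ on the antidiagonal through $A$. That argument is uniform in $A$ and needs no list of exceptional points.

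Your zone split buys economy. The bound $f(A)<\tfrac13\binom{2n}{n}=\tfrac{4(2n-1)}{3n}\,C<3C$ disposes of all of $\R_2(n)$ in one line. On the subdiagonals, $f(a+1,a)$ is unimodal with end values $f(3,2)\le 3C$ (for $n\ge 4$) and $f(n-1,n-2)=3C$. Together with your subtractions at $(n,n-1)$ and $(n-1,n)$, this correctly handles $\R_1$. Your small counts are also right. In fact your value $f_B(0,1)=C+\binom{2n-3}{n}$ is the correct one; the paper's display in its Case~1 gives $f_B(1,0)$'s value to both points, which is harmless there.

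\paragraph{The gap: the diagonal.} The diagonal is not covered as written. Your list of near-$(n,n)$ points omits $(n-2,n-2)$, the mirror image of $(2,2)$. There $f(n-2,n-2)=f(2,2)=6\binom{2n-4}{n-2}>3C$ for every $n$, because $6(n-1)>3(2n-3)$. So monotonicity of the raw count cannot handle that point.

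Likewise, for $5\le n\le 7$ the raw count already exceeds the target at $(3,3)$ and $(n-3,n-3)$; for example, when $n=6$, $f(3,3)=400>378=f(2,1)$. So the sentence ``farther out toward the middle, the bound follows by monotonicity'' is false in those cases. The fallback citation does not rescue it: Case~1 of Lemma~\ref{lem:R1_A_in_R0} is the case $a<c$, which is empty when $c=1$.

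\paragraph{A short repair.} The fix is easy, and it also makes your ``delicate'' estimate at $J$ unnecessary. For $2\le a\le n-1$,
\[ f\big((1,2),(a,a)\big)=3\binom{2a-3}{a-1}\binom{2n-2a}{n-a}=\frac{3a}{4(2a-1)}\,f(a,a)\ge \frac38\, f(a,a). \]
Hence $f_B(a,a)\le\tfrac58 f(a,a)\le\tfrac58 f(1,1)=\tfrac52\,C<3C$, using Kaplan's bound and $f(1,1)=2\binom{2n-2}{n-1}=4C$. This holds uniformly along the whole diagonal, including $(2,2)$, $(n-2,n-2)$ and $J$.

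Even your direct estimate at $J$ needs no small-case check: $6\binom{2n-5}{n-2}=\frac{3(n-1)}{2(2n-3)}\binom{2n-2}{n-1}$, and $3(n-1)\ge 2n-3$ holds for all $n$.
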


\begin{proof}
We will only prove the case when $B=(1,2)$; the other three cases can be treated by similar symmetric arguments. Let $A=(a,b)$. The proof proceeds in three parts according to the position of $A$: the easy cases $a\in\{0,1,2\}$; the cases $b\in\{0,1,2\}$ with $a\ge 3$, handled by an injective map $\psi$; and the main case $a,b\ge 3$, which splits into two sub-cases ($a\ge b\ge 3$ and $b>a\ge 3$).

\smallskip
\noindent{\em Case 1:} $a\in\{0,1,2\}$. The bound $f_B(A) \le f(2,1)$ can be verified in each case. For the four points $A\in\{(0,1),(1,0),(1,1),(2,2)\}$, a direct computation gives
\begin{align*}
 f_B(0,1) = f_B(1,0) &= 2\binom{2n-3}{n-2} + \binom{2n-3}{n-3} < 3\binom{2n-3}{n-2} = f(2,1),\\
 f_B(1,1) &= 2\binom{2n-3}{n-2} < f(2,1), \\
 f_B(2,2) &= 3\binom{2n-4}{n-2} < 3\binom{2n-3}{n-2} = f(2,1).
\end{align*}
For $A=(2,1)$, $f_B(A) = f(2,1)$ trivially. For any other $A$ with $a\le 2$, one can check directly that $f(A) \le f(2,1)$, and so $f_B(A) \le f(A) \le f(2,1)$.

\smallskip
\noindent{\em Case 2:} $b\in\{0,1,2\}$ and $a\ge 3$. Every $B$-avoiding path through $A=(a,b)$ passes through exactly one of $(0,3), (2,1), (3,0)$. Since $b\le 2$, the term $f((0,3),A)$ vanishes and
\[ f_B(A) = f((2,1),A) + f((3,0),A). \]
Let $A_1 = (a-1, b+1)$. Then $A$ and $A_1$ are distinct points on the line $x+y = a+b$, so
\[ f(2,1) \ge f((2,1),A) + f((2,1),A_1). \]
A direct computation gives
\[ \frac{f((2,1),A_1)}{f((3,0),A)} = \frac{3(n-b)}{n-a+1} \ge 1, \]
where the inequality follows from $3(n-b) \ge n-a+1$, which is equivalent to $2n+a-3b-1\ge 0$, and holds for $a\ge 3$, $b\le 2$, $n\ge 2$. Hence $f((2,1),A_1) \ge f((3,0),A)$, and we get 
\[ f(2,1) \ge f((2,1),A) + f((3,0),A) = f_B(A). \]

\smallskip
\noindent{\em Case 3:} $a,b\ge 3$. By the same antidiagonal decomposition used in Case~2,
\[ f_B(A) = f((0,3),A) + f((2,1),A) + f((3,0),A), \]
where all three terms are now potentially nonzero (since $a,b\ge 3$). Moreover, if $A_1$ and $A_2$ are points on the line $x+y = a+b$, both different from $A$, then
\[ f(2,1) \ge f((2,1),A) + f((2,1),A_1) + f((2,1),A_2). \]
Our goal is to choose $A_1$ and $A_2$ (depending on $A$) such that
\[ f((2,1),A_1) + f((2,1),A_2) \ge f((0,3),A) + f((3,0),A). \]
Let $\Delta_{A_1} = f((2,1),A_1)- f((3,0),A)$ and $\Delta_{A_2} = f((2,1),A_2)- f((0,3),A)$.

\smallskip
{\em Case 3a:} $a\ge b\ge 3$. Let $A_1=(a-1,b+1)$ and $A_2=(a+1,b-1)$. Then
\begin{align*}
 \Delta_{A_1} &= 3\binom{a+b-3}{a-3} \binom{2n-a-b}{n-a+1} - \binom{a+b-3}{a-3} \binom{2n-a-b}{n-a} \\[1ex]
 &= \binom{a+b - 3}{a-2}\binom{2n-a-b}{n - a} \frac{a-2}{b}\left[\frac{3(n-b)}{n-a+1}-1\right],
\end{align*}
and
\begin{align*}
 \Delta_{A_2} &= 3\binom{a+b-3}{a-1} \binom{2n-a-b}{n-a-1} - \binom{a+b-3}{a} \binom{2n-a-b}{n-a} \\[1ex]
&= \binom{a+b - 3}{a-2}\binom{2n-a-b}{n-a} \frac{b-1}{a-1}\left[\frac{3(n-a)}{n-b+1} - \frac{b-2}{a}\right].
\end{align*}

If $a=b$, then $\frac{3(n-a)}{n-a+1}> 1> \frac{a-2}{a}$ and the expressions inside brackets are all positive. This implies $\Delta_{A_1}+\Delta_{A_2}>0$, and so $f_B(A)<f(2,1)$.

If $a>b\ge 3$, then $\frac{a-2}{b}\ge \frac{b-1}{a-1}$ and $\frac{b-2}{a}<1$, hence
\[ \Delta_{A_1}+\Delta_{A_2}> \binom{a+b - 3}{a-2}\binom{2n-a-b}{n-a} \frac{3(b-1)}{a-1}\left[\frac{n-b}{n-a+1}+\frac{n-a}{n-b+1}-\frac{2}{3}\right]. \]
With $x=n-a+1$ and $y=n-b+1$, the expression inside the bracket equals $h(x,y) = \frac{y-1}{x} + \frac{x-1}{y} - \frac23$. A direct computation gives
\[ h(x,y) = \frac{(x+y)^2 + 2(x-y)^2 - 3(x+y)}{3xy}, \]
so $h(x,y) > 0$ iff $(x+y)^2 + 2(x-y)^2 > 3(x+y)$. For $a>b$ with $n\ge a$, we have $x = n-a+1 \ge 1$ and $y = n-b+1 \ge 2$, so $x+y \ge 3$. If $x+y = 3$, then $(x,y) = (1,2)$ and $2(x-y)^2 = 2 > 0$; if $x+y > 3$, then $(x+y)^2 > 3(x+y)$. In either case the inequality is strict, so $h(x,y) > 0$ and we conclude $\Delta_{A_1}+\Delta_{A_2}>0$.

\smallskip
{\em Case 3b:} $b>a\ge 3$. We now consider $A_1=(a+1,b-1)$ and $A_2=(a+2,b-2)$, and let
\[ \Delta_{A_1} = f((2,1),A_1)- f((3,0),A) \;\text{ and }\; \Delta_{A_2} = f((2,1),A_2)- f((0,3),A). \]
Then
\begin{align*}
 \Delta_{A_1} &= 3\binom{a+b-3}{a-1} \binom{2n-a-b}{n-a-1} - \binom{a+b-3}{a-3} \binom{2n-a-b}{n-a} \\[1ex]
&= \binom{a+b - 3}{a-2}\binom{2n-a-b}{n-a} \left[\frac{3(b-1)(n-a)}{(a-1)(n-b+1)} - \frac{a-2}{b}\right],
\end{align*}
and
\begin{align*}
 \Delta_{A_2} &= 3\binom{a+b-3}{a} \binom{2n-a-b}{n-a-2} - \binom{a+b-3}{a} \binom{2n-a-b}{n-a} \\[1ex]
 &= \binom{a+b - 3}{a}\binom{2n-a-b}{n-a-1} \left[\frac{3(n-a-1)}{n-b+2}-\frac{n-b+1}{n-a}\right].
\end{align*}
Since $a<b$, we have $\frac{3(b-1)(n-a)}{(a-1)(n-b+1)}> 1 >\frac{a-2}{b}$ and so $\Delta_{A_1}>0$. For $\Delta_{A_2}$, we have
\begin{align*}
 \frac{3(n-a-1)}{n-b+2}-\frac{n-b+1}{n-a} &= \frac{3(n-a)(n-a-1) - (n-b+2)(n-b+1)}{(n-b+2)(n-a)} \\
 &\ge \frac{3(n-a)(n-a-1) - (n-a+1)(n-a)}{(n-b+2)(n-a)} \\
 &= \frac{2(n-a-2)}{n-b+2} \ge 0 \quad\text{if } a\le n-2,
\end{align*}
where the inequality uses $(n-b+2)(n-b+1)\le(n-a+1)(n-a)$, which follows from $a<b$. Thus $\Delta_{A_2}\ge 0$ when $a\le n-2$, and the bound $f_B(A)\le f(2,1)$ follows.

The remaining case is $a=n-1$ and $b=n$, where the choice $A_2 = (a+2,b-2) = (n+1,n-2)$ falls outside $\Grid(n)$ and the above argument does not apply. In this case, $f((2,1), A_2) = 0$, so
\[ \Delta_{A_2} = f((2,1),A_2) - f((0,3),A) = -f((0,3),A) = -\binom{2n-4}{n-1}. \]
A direct computation then gives
\begin{align*}
 \Delta_{A_1}+\Delta_{A_2} &= \binom{2n-4}{n-1} \left[\frac{3(n-1)}{n-2} - \frac{n-3}{n}\right] -\binom{2n-4}{n-1}\\
 &= \binom{2n-4}{n-1} \left[\frac{3(n-1)}{n-2} - \frac{n-3}{n} -1\right] \\
 &= \binom{2n-4}{n-1} \frac{n^2+4n-6}{n(n-2)},
\end{align*}
which is positive since $n\ge 3$.
\end{proof}

\section{Obstructions along $\R_0$} 
\label{sec:diagonal}

In this section we focus on obstructions along the diagonal region 
\[ \R_0(n) = \{(x,y)\in\Grid(n): x=y, \text{ and } 0<x<n\}. \]

\begin{theorem}\label{thm:R0}
Let $B=(c,c)\in\R_0(n)$ with $n\ge 5$. For every $A\in\Grid(n)\backslash\{(0,0),(n,n)\}$, we have
\begin{equation*}
 f_B(A) \le 
 \begin{cases}
 f_B(1,1) & \text{if $c<n\le 2c$}, \\
 f_B(n-1,n-1) & \text{if $2c \le n$.}
 \end{cases}
\end{equation*}
\end{theorem}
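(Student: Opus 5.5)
By the symmetry \eqref{eq:symmetry} (reflection about $x+y=n$ sends $(c,c)$ to $(n-c,n-c)$ and $(1,1)$ to $(n-1,n-1)$), it suffices to treat the case $2c\le n$. In that case we show $f_B(A)\le f_B(J)$ with $J=(n-1,n-1)$. The first step is to record the comparison between the two corner candidates. Since $f(1,1)=f(J)$, we have $f_B(J)-f_B(1,1)=f((1,1),B)-f(J,B)$, and a direct computation gives
\[ \frac{f(J,B)}{f((1,1),B)}=\frac{\binom{2c}{c}\binom{2n-2c-2}{n-c-1}}{2\binom{2c-2}{c-1}\binom{2n-2c}{n-c}}=\frac{(2c-1)(n-c)}{c(2n-2c-1)}. \]
This ratio is at most $1$ iff $n\ge 2c$. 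Hence $f_B(1,1)\le f_B(J)$ in our range, with equality when $n=2c$.

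Next we split according to the position of $A=(a,b)$ relative to $B$.

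\emph{$A$ comparable to $B$ and below it} ($a\le c$, $b\le c$). We mimic the decomposition in Lemma~\ref{lem:R1_A_in_R0}. Write $f_B(A)$ as a sum over the three points $J_1=(n-2,n)$, $J=(n-1,n-1)$, $J_1'=(n,n-2)$ on the antidiagonal $x+y=2n-2$. We then bound it by $f_B(J)$ using points on the antidiagonal through $A$, via factorwise binomial comparisons as in Case~1 there. Alternatively, by the reflection symmetry this case is the mirror image of the next one, which bounds such $A$ by $f_B(1,1)\le f_B(J)$.

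\emph{$A$ comparable to $B$ and above it} ($a,b\ge c$). Here every $B$-avoiding path to $A$ enters through the steps after $(0,0)$. The argument of Lemma~\ref{lem:Diagonal} applied from the opposite end, splitting at $(1,0),(0,1)$ and at the successors of $A$, gives $f_B(A)\le f_B(J)$ directly for diagonal $A$. For off-diagonal $A$ we compare with a point $A_1$ on the same antidiagonal and closer to the diagonal.

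\emph{$A$ incomparable with $B$.} Then $f_B(A)=f(A)$, and $A\in\R_1\cup\R_2$ is off the diagonal.
\begin{itemize}
\item If $A\in\R_2$, Lemma~\ref{lem:thirdTotal} gives $f(A)<\frac13\binom{2n}{n}$. We need $f_B(J)>\frac13\binom{2n}{n}$, i.e.\ $2\binom{2n-2}{n-1}-\binom{2c}{c}\binom{2n-2c-2}{n-c-1}>\frac{2(2n-1)}{3n}\binom{2n-2}{n-1}$. This holds because $f(J,B)$ is a small fraction of $f(J)$ whenever $2\le c\le n/2$; the estimate is worst at $c=1$ and $c=n/2$, which we check by hand.
\item If $A\in\R_1$ and incomparable with $B$, then $A=(a,a\pm1)$ with $a<c<a\pm1$, which is impossible. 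So this case is empty.
\end{itemize}

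The remaining comparable off-diagonal points (those in $\R_1$ or $\R_2$ with $a,b$ on the same side of $c$) are handled as follows. We bound them by the diagonal point on their antidiagonal: moving toward the diagonal increases $f$, and it increases $f(\cdot,B)$ in a controlled way. Cases 1 and 3a of Theorem~\ref{thm:Spoints} show the style of comparison.

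The main obstacle will be the point $A=(c,c\pm1)$ and the diagonal points $A=(a,a)$ with $a>c$, where paths through $B$ and $A$ are numerous. For these we follow the telescoping strategy of Lemma~\ref{lem:R1_A_in_R0}, Case~3. We define $H(c)=f_{(c,c)}(J)-f_{(c,c)}(A_c)$ for the critical $A_c$ and show that $H(c+1)-H(c)$ has a sign determined by the monotone function $x\mapsto\frac{(2x-1)(2x+1)}{x(x+1)}$. We then verify $H(1)\ge0$ (respectively the smallest admissible $c$) explicitly, and this is where the hypothesis $n\ge5$ enters.
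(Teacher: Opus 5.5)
\textbf{There is a genuine gap: the off-diagonal points $A$ are not handled.}

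\textbf{What is correct.} Your reduction to $2c\le n$ is fine. So is the comparison $f_B(1,1)\le f_B(J)$. The final ratio $\frac{(2c-1)(n-c)}{c(2n-2c-1)}$ is right, although your intermediate fraction drops the factor $2$ coming from $J\to(n,n)$; the correct value is $f(J,B)=2\binom{2c}{c}\binom{2n-2c-2}{n-c-1}$.

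The diagonal points also work as you intend, provided you split at the correct end:
\begin{itemize}
\item for $a<c$, split at the successors of $(a,a)$ and compare with $(1,1)$, exactly as in Lemma~\ref{lem:Diagonal};
\item for $a>c$, split at the predecessors $(a-1,a),(a,a-1)$, which gives $f_B(a,a)=f_B((a-1,a),J)+f_B((a,a-1),J)\le f_B(J)$.
\end{itemize}
This is simpler than the paper's antidiagonal decomposition. Note, however, that you describe the split as being at $(1,0),(0,1)$ and the successors of $A$, and later contradict yourself by calling these diagonal points the main obstacle.

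\textbf{Where it fails.} None of your devices for off-diagonal $A$ works.
\begin{itemize}
\item \emph{The $\R_2$ bullet.} The inequality $f_B(J)>\frac13\binom{2n}{n}$ is false, and the dropped factor $2$ hides this. For $c=1$ one has $f_B(J)=\frac{2(n-2)}{2n-3}\binom{2n-2}{n-1}<\frac13\binom{2n}{n}$ for every $n$. For $(c,n)=(2,5)$ it gives $68<84$, and for $(c,n)=(3,10)$ it gives $60280<\frac13\binom{20}{10}$. So this is not an endpoint check, and the route cannot yield the (true) bound $f(A)\le f_B(J)$.
\item \emph{The ``mirror'' shortcut for $A$ below $B$.} This is invalid. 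The reflection $(x,y)\mapsto(n-x,n-y)$ takes you to the regime $2c\ge n$, and in your regime the mirrored claim is false off the diagonal. For $B=(1,1)$ and $A=(0,1)$ we have $f_B(1,1)=0<f_B(0,1)$. For $B=(2,2)$ and $n=5$ we have $f_B(0,1)=66>60=f_B(1,1)$.
\item \emph{The Case~1 factorwise comparison of Lemma~\ref{lem:R1_A_in_R0}.} That comparison was built for diagonal $A$. For $A=(0,1)$ the only other point on its antidiagonal is $(1,0)$, and what you would need is the full inequality $f_B(0,1)\le f_B(J)$.
\item \emph{``Moving to the diagonal point on the same antidiagonal.''} This has no target for $A\in\R_1$, since odd antidiagonals contain no diagonal point. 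The claimed control of $f(\cdot,B)$ is only asserted. The function $H(c)$ for an unspecified ``critical $A_c$'' is never defined or computed, and could treat at most one point per $c$.
\end{itemize}

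\textbf{The missing idea.} Because $B$ lies on $y=x$, the reflection $(x,y)\mapsto(y,x)$ fixes $B$. Split the $B$-avoiding paths through an off-diagonal $A=(a,b)$ by their last step, and reflect those that pass through $(n,n-1)$. This gives $f_B(A)=f_B(A,(n-1,n))+f_B(A',(n-1,n))\le f_B(n-1,n)$, because $A\neq A'=(b,a)$ lie on the same antidiagonal.

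This bounds every off-diagonal point at once. Everything then reduces to the single inequality $f_B(n-1,n)\le f_B(n-1,n-1)$ for $2c\le n$. After the reflection $(x,y)\mapsto(n-x,n-y)$, this is exactly the paper's $\Delta=f_B(1,1)-f_B(0,1)\ge 0$ in the regime $c<n\le 2c$.

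That inequality needs real estimation. In the paper's notation, $\Delta$ is written in closed form; the subtracted term is bounded via Lemma~\ref{lem:thirdTotal} applied to $(c-2,c)\in\Grid(n-1)$ for $c\ge5$; and $c\in\{3,4\}$ is checked directly. This is where the hypothesis $n\ge5$ is used.
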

\begin{proof}
Let $A=(a,b)\in\Grid(n)$. The proof has two parts: the off-diagonal case ($a\ne b$), where we reduce $f_B(A) \le f_B(1,1)$ to a single algebraic inequality; and the on-diagonal case ($a=b\ne c$), handled by an antidiagonal decomposition. The case $2c\le n$ follows from the case $c<n\le 2c$ by the reflection $(x,y)\mapsto (n-x,n-y)$.

Suppose first that $a\ne b$. Every $B$-avoiding path through $A$ takes either $\N$ or $\E$ as its first step, reaching $(0,1)$ or $(1,0)$ respectively. Reflection about the line $y=x$ (which fixes $B$) gives $f_B((1,0),A) = f_B((0,1),A')$ where $A'=(b,a)$. Since $a\ne b$, the points $A$ and $A'$ are distinct and lie on the antidiagonal $x+y=a+b$, so
\[ f_B(A) = f_B((0,1),A) + f_B((1,0),A) = f_B((0,1),A) + f_B((0,1),A') \le f_B(0,1). \]
It remains to show $f_B(0,1)\le f_B(1,1)$. Decomposing $f_B(1,1)$ by the first step and $f_B(0,1)$ by the second step, we get $f_B(1,1) - f_B(0,1) = f_B((1,0),(1,1))-f_B(0,2)$. We denote this difference by $\Delta$ and proceed to show $\Delta\ge 0$. Routine manipulations give
\begin{equation*}
 \Delta = \frac{1}{n}\binom{2n-2}{n-1} - \binom{2c-2}{c}\binom{2n-2c}{n-c-1}\frac{n-c+1}{(c-1)(n-c)}.
\end{equation*}

If $c<n\le 2c$ and $n\ge 5$, then we must have $c\ge 3$. For $c\in\{3,4\}$, direct computation gives $\Delta \ge 2$ in all six admissible cases (with $\Delta=2$ when $c=3$ and $n\in\{5,6\}$). For $c\ge 5$, we think of $(c-2,c)$ as a point in $\Grid(n-1)$ and use Lemma~\ref{lem:thirdTotal} to conclude
\[ \binom{2c-2}{c}\binom{2n-2c}{n-c-1} < \frac13 \binom{2n-2}{n-1}. \]
Therefore,
\begin{equation*}
 \Delta >  \binom{2n-2}{n-1}\left[\frac{1}{n} - \frac{n-c+1}{3(c-1)(n-c)}\right] = \binom{2n-2}{n-1}\frac{Q(c,n)}{3n(c-1)(n-c)},
\end{equation*}
where $Q(c,n) = -3c(c-1)+4(c-1)n-n^2$. Treating $Q$ as a quadratic polynomial in $n$, its roots are $(2c-2)\pm\sqrt{(c-1)(c-4)}$, and a short calculation shows that, for $c\ge 5$, the hypothesis range $c+1\le n\le 2c$ lies between these roots. Hence $Q(c,n)\ge 0$, and so $\Delta>0$.

In conclusion, $f_B(A)\le f_B(1,1)$ whenever $A$ is not on the diagonal $y=x$. It remains to examine the case when $A=(a,a)$ with $1\le a \le n-1$ and $a\not=c$.

Suppose first that $a<c$. Every path from $A$ to $(n,n)$ crosses the antidiagonal $x+y=2c$ at exactly one point, and the points of this antidiagonal that are reachable from $A$ and lie in $\Grid(n)$ are $B$ together with $B_k=(c-k,c+k)$ and $B_k'=(c+k,c-k)$ for $1\le k\le \min\{c-a,n-c\}$. Hence, with $K=\min\{c-a,n-c\}$,
\[ f_B(A) = \sum_{k=1}^{K} \big(f(A,B_k)+f(A,B_k')\big). \]
If $a=1$, then $A=(1,1)$ and the bound $f_B(A)\le f_B(1,1)$ is trivially true, so we may assume $a>1$. Let $J=(1,1)$, $A_1=(a-1,a+1)$, and $A_1'=(a+1,a-1)$. Clearly,
\[ f(J,B_k) - f(A,B_k) \ge f(J,A_1,B_k) + f(J,A,B_k) + f(J,A_1',B_k) - f(A,B_k). \]
Let $\Delta_k = f(J,A_1,B_k) + f(J,A,B_k) + f(J,A_1',B_k) - f(A,B_k)$. If $k<c-a$, then
\begin{align*}
 \Delta_k = 2\binom{2a-2}{a}\binom{2c-2a}{c-k-a}\binom{2n-2c}{n-c+k}
 \frac{(c-a)^2+3k^2-1}{(c-k-a+1)(c+k-a+1)} > 0.
\end{align*}
On the other hand, if $k=c-a$ (which implies $k=K$), then $f(J,A_1',B_k)=0$ and
\begin{align*}
 \Delta_{c-a} = 2\binom{2a-2}{a}\binom{2n-2c}{n-a}(2c-2a-1) > 0.
\end{align*}
In conclusion, $f(J,B_k) - f(A,B_k) \ge \Delta_k>0$ for every $k\in\{1,\dots,K\}$.

Similarly, using $B_k'$ as reference point, one gets $f(J,B_k') - f(A,B_k')\ge 0$ and thus
\[ f_B(J) - f_B(A) \ge  \sum_{k=1}^{K} \big(f(J,B_k) - f(A,B_k) + f(J,B_k') - f(A,B_k')\big) > 0. \]

The case $c<a\le n-1$ reduces to the previous one via the reflection $\sigma(x,y)=(n-x,n-y)$. Setting $\tilde B=\sigma(B)=(n-c,n-c)$ and $\tilde A=\sigma(A)=(n-a,n-a)$, identity~\eqref{eq:symmetry} gives $f_B(A)=f_{\tilde B}(\tilde A)$. Since $n\le 2c$, we have $\tilde a<\tilde c$, so the argument above yields $f_{\tilde B}(\tilde A)\le f_{\tilde B}(1,1) = f_B(n-1,n-1)$. Finally, $f_B(n-1,n-1)\le f_B(1,1)$ follows from $f(1,1)=f(n-1,n-1)$ together with the inequality $f((1,1),B)\le f((n-1,n-1),B)$.
\end{proof}

\section{Classification and small cases}
\label{sec:comments}

The results of Sections~\ref{sec:2unitsAway}--\ref{sec:diagonal}, together with Lemma~\ref{lem:axes}, can be summarized as follows. Let $n\ge 9$ and let $B\in\Grid(n)\backslash\{(0,0),(n,n)\}$ be an obstruction. Let
\begin{equation}
\label{eq:maximizers}
\begin{gathered}
 \mathcal{S} = \{(1,2),(2,1),(n-1,n-2),(n-2,n-1)\}, \text{ and} \\
 \mathcal{C} = \{(1,0),(0,1),(1,1),(n-1,n),(n,n-1),(n-1,n-1)\}. 
\end{gathered}
\end{equation}
If $B\in\mathcal{S}$, then by Theorem~\ref{thm:Spoints} the maximum of $f_B(A)$ over $A\in\Grid(n)\backslash\{(0,0),(n,n)\}$ is attained at the reflection of $B$ across the diagonal $y=x$, which is the other point of $\mathcal{S}$ in the same diagonal half of the grid. For every other obstruction $B$, the maximum is attained at one of the six corner-adjacent points of $\mathcal{C}$: this follows from Lemma~\ref{lem:axes} when $B$ is on the boundary of $\Grid(n)$, from the main theorem of Section~\ref{sec:2unitsAway} when $B\in\R_2(n)$, from Theorem~\ref{thm:R1} when $B\in\R_1(n)\backslash\mathcal{S}$, and from Theorem~\ref{thm:R0} when $B\in\R_0(n)$. Thus the maximum is always attained at one of the ten points of $\mathcal{S}\cup\mathcal{C}$, as claimed in the introduction.

Figures~\ref{fig:case_n56}--\ref{fig:case_n=8} show, for each $5\le n\le 8$ and each obstruction $B\in\Grid(n)\backslash\{(0,0),(n,n)\}$, the location of the point(s) of maximum traffic. A dot at $B$ is colored according to its maximizer: blue for $(1,1)$, cyan for $(n-1,n-1)$, green for a tie between the two, and yellow for any other maximizer (whose location is given by the label below~$B$). Gray dots mark obstructions whose maximizer is at one of $(0,1)$, $(1,0)$, $(n-1,n)$, or $(n,n-1)$. In the $n=8$ picture, red marks the two obstructions $B=(3,5)$ and $B=(5,3)$ on the antidiagonal $x+y=8$ for which the maximum is tied between two points of $\mathcal{C}$ other than $(1,1)$ and $(n-1,n-1)$.

\begin{figure}[ht]
\begin{tikzpicture}[scale=0.95]
\begin{scope}
\draw[lightgray] (0,0) grid (5,5);
\foreach \x/\y in {0/0,5/5}{\draw[fill] (\x,\y) circle (0.13);}
\foreach \x in {1,...,5}{\draw[fill=gray!50] (\x,0) circle (0.1);}
\foreach \x in {0,...,4}{\draw[fill=gray!50] (\x,5) circle (0.1);}
\foreach \y in {1,...,4}{\foreach \x in {0,...,5}{\draw[fill=gray!50] (\x,\y) circle (0.1);}}
\foreach \x/\y in {3/3,4/4,0/2,0/3,0/4,2/0,3/0,4/0}{
\draw[fill=blue!60] (\x,\y) circle (0.1);
\node[below=1pt] at (\x,\y) {\tiny $(1,1)$};}
\foreach \x/\y in {1/1,2/2,1/5,2/5,3/5,5/1,5/2,5/3}{
\draw[fill=cyan!60] (\x,\y) circle (0.1);
\node[below=1pt] at (\x,\y) {\tiny $(4,4)$};}
\foreach \x/\y in {1/2,2/1,2/3,3/2,1/3,3/1,2/4,4/2,3/4,4/3}{
\draw[fill=sMax] (\x,\y) circle (0.1);}
\foreach \x/\y in {0/1,1/0,1/2,2/1,2/3,3/2,3/4,4/3,4/5,5/4}{
\node[below=1pt] at (\x,\y) {\tiny $(\y,\x)$};}
\foreach \x/\y in {1/3,3/1}{
\node[below=1pt] at (\x,\y) {\tiny $(2,2)$};}
\foreach \x/\y in {2/4,4/2}{
\node[below=1pt] at (\x,\y) {\tiny $(3,3)$};}
\foreach \x/\y in {0/5,1/4,4/1,5/0}{
\draw[fill=2Max] (\x,\y) circle (0.1);
\node[below=1pt] at (\x,\y) {\tiny $(1,1)$};
\node[below=9pt] at (\x,\y) {\tiny $(4,4)$};}
\end{scope}
\begin{scope}[xshift=200,yshift=-14]
\draw[lightgray] (0,0) grid (6,6);
\foreach \x/\y in {0/0,6/6}{\draw[fill] (\x,\y) circle (0.13);}
\foreach \x in {1,...,6}{\draw[fill=gray!50] (\x,0) circle (0.1);}
\foreach \x in {0,...,5}{\draw[fill=gray!50] (\x,6) circle (0.1);}
\foreach \y in {1,...,5}{\foreach \x in {0,...,6}{\draw[fill=gray!50] (\x,\y) circle (0.1);}}
\foreach \x/\y in {4/4,5/5,0/2,0/3,0/4,0/5,2/0,3/0,4/0,5/0,1/4,4/1}{
\draw[fill=blue!60] (\x,\y) circle (0.1);
\node[below=1pt] at (\x,\y) {\tiny $(1,1)$};}
\foreach \x/\y in {1/1,2/2,1/6,2/6,3/6,4/6,6/1,6/2,6/3,6/4,2/5,5/2}{
\draw[fill=cyan!60] (\x,\y) circle (0.1);
\node[below=1pt] at (\x,\y) {\tiny $(5,5)$};}
\foreach \x/\y in {1/2,2/1,2/3,3/2,1/3,3/1,2/4,4/2,3/4,4/3,3/5,5/3,4/5,5/4}{
\draw[fill=sMax] (\x,\y) circle (0.1);}
\foreach \x/\y in {0/1,1/0,1/2,2/1,2/3,3/2,3/4,4/3,4/5,5/4,5/6,6/5}{
\node[below=1pt] at (\x,\y) {\tiny $(\y,\x)$};}
\foreach \x/\y in {1/3,3/1}{
\node[below=1pt] at (\x,\y) {\tiny $(2,2)$};}
\foreach \x/\y in {3/5,5/3}{
\node[below=1pt] at (\x,\y) {\tiny $(4,4)$};}
\foreach \x/\y in {2/4,4/2}{
\node[below=1pt] at (\x,\y) {\tiny $(3,3)$};}
\foreach \x/\y in {0/6,1/5,3/3,5/1,6/0}{
\draw[fill=2Max] (\x,\y) circle (0.1);
\node[below=1pt] at (\x,\y) {\tiny $(1,1)$};
\node[below=9pt] at (\x,\y) {\tiny $(5,5)$};}
\end{scope}
\end{tikzpicture}
\caption{Cases $n=5$ and $n=6$.}
\label{fig:case_n56}
\end{figure}

\begin{figure}[ht]
\begin{tikzpicture}[scale=1]
\draw[lightgray] (0,0) grid (7,7);
\foreach \x/\y in {0/0,7/7}{\draw[fill] (\x,\y) circle (0.13);}
\foreach \x in {1,...,7}{\draw[fill=gray!50] (\x,0) circle (0.1);}
\foreach \x in {0,...,6}{\draw[fill=gray!50] (\x,7) circle (0.1);}
\foreach \y in {1,...,6}{\foreach \x in {0,...,7}{\draw[fill=gray!50] (\x,\y) circle (0.1);}}
\foreach \x/\y in {4/4,5/5,6/6,0/2,0/3,0/4,0/5,0/6,2/0,3/0,4/0,5/0,6/0,1/4,1/5,4/1,5/1}{
\draw[fill=blue!60] (\x,\y) circle (0.1);
\node[below=1pt] at (\x,\y) {\tiny $(1,1)$};}
\foreach \x/\y in {1/1,2/2,3/3,1/7,2/7,3/7,4/7,5/7,7/1,7/2,7/3,7/4,7/5,2/6,6/2,3/6,6/3}{
\draw[fill=cyan!60] (\x,\y) circle (0.1);
\node[below=1pt] at (\x,\y) {\tiny $(6,6)$};}
\foreach \x/\y in {1/2,2/1,2/3,3/2,1/3,3/1,3/4,4/3,4/5,5/4,4/6,6/4,5/6,6/5}{
\draw[fill=sMax] (\x,\y) circle (0.1);}
\foreach \x/\y in {0/1,1/0,1/2,2/1,2/3,3/2,3/4,4/3,4/5,5/4,5/6,6/5,6/7,7/6}{
\node[below=1pt] at (\x,\y) {\tiny $(\y,\x)$};}
\foreach \x/\y in {1/3,3/1}{
\node[below=1pt] at (\x,\y) {\tiny $(2,2)$};}
\foreach \x/\y in {4/6,6/4}{
\node[below=1pt] at (\x,\y) {\tiny $(5,5)$};}
\foreach \x/\y in {2/4}{
\node[below=1pt] at (\x,\y) {\tiny $(1,0)$};}
\foreach \x/\y in {4/2}{
\node[below=1pt] at (\x,\y) {\tiny $(0,1)$};}
\foreach \x/\y in {3/5}{
\node[below=1pt] at (\x,\y) {\tiny $(7,6)$};}
\foreach \x/\y in {5/3}{
\node[below=1pt] at (\x,\y) {\tiny $(6,7)$};}
\foreach \x/\y in {0/7,1/6,2/5,5/2,6/1,7/0}{
\draw[fill=2Max] (\x,\y) circle (0.1);
\node[below=1pt] at (\x,\y) {\tiny $(1,1)$};
\node[below=9pt] at (\x,\y) {\tiny $(6,6)$};}
\end{tikzpicture}
\caption{Case $n=7$.}
\label{fig:case_n=7}
\end{figure}

\begin{figure}[ht]
\begin{tikzpicture}[scale=1]
\draw[lightgray] (0,0) grid (8,8);
\foreach \x/\y in {0/0,8/8}{\draw[fill] (\x,\y) circle (0.13);}
\foreach \x in {1,...,8}{\draw[fill=gray!50] (\x,0) circle (0.1);}
\foreach \x in {0,...,7}{\draw[fill=gray!50] (\x,8) circle (0.1);}
\foreach \y in {1,...,7}{\foreach \x in {0,...,8}{\draw[fill=gray!50] (\x,\y) circle (0.1);}}
\foreach \x/\y in {5/5,6/6,7/7,0/2,0/3,0/4,0/5,0/6,0/7,2/0,3/0,4/0,5/0,6/0,7/0,1/4,1/5,1/6,4/1,5/1,6/1,2/5,5/2}{
\draw[fill=blue!60] (\x,\y) circle (0.1);
\node[below=1pt] at (\x,\y) {\tiny $(1,1)$};}
\foreach \x/\y in {1/1,2/2,3/3,1/8,2/8,3/8,4/8,5/8,6/8,8/1,8/2,8/3,8/4,8/5,8/6,2/7,7/2,3/6,6/3,3/7,7/3,4/7,7/4}{
\draw[fill=cyan!60] (\x,\y) circle (0.1);
\node[below=1pt] at (\x,\y) {\tiny $(7,7)$};}
\foreach \x/\y in {1/2,2/1,2/3,3/2,5/6,6/5,6/7,7/6}{
\draw[fill=sMax] (\x,\y) circle (0.1);}
\foreach \x/\y in {0/1,1/0,1/2,2/1,2/3,3/2,5/6,6/5,6/7,7/6,7/8,8/7}{
\node[below=1pt] at (\x,\y) {\tiny $(\y,\x)$};}
\foreach \x/\y in {1/3,2/4,3/4}{
\node[below=1pt] at (\x,\y) {\tiny $(1,0)$};}
\foreach \x/\y in {3/1,4/2,4/3}{
\node[below=1pt] at (\x,\y) {\tiny $(0,1)$};}
\foreach \x/\y in {4/5,4/6,5/7}{
\node[below=1pt] at (\x,\y) {\tiny $(8,7)$};}
\foreach \x/\y in {5/4,6/4,7/5}{
\node[below=1pt] at (\x,\y) {\tiny $(7,8)$};}
\foreach \x/\y in {0/8,1/7,2/6,6/2,7/1,8/0,4/4}{
\draw[fill=2Max] (\x,\y) circle (0.1);
\node[below=1pt] at (\x,\y) {\tiny $(1,1)$};
\node[below=9pt] at (\x,\y) {\tiny $(7,7)$};}
\foreach \x/\y in {3/5}{
\draw[fill=red!80!black] (\x,\y) circle (0.1);
\node[below=1pt] at (\x,\y) {\tiny $(1,0)$};
\node[below=9pt] at (\x,\y) {\tiny $(8,7)$};}
\foreach \x/\y in {5/3}{
\draw[fill=red!80!black] (\x,\y) circle (0.1);
\node[below=1pt] at (\x,\y) {\tiny $(0,1)$};
\node[below=9pt] at (\x,\y) {\tiny $(7,8)$};}
\end{tikzpicture}
\caption{Case $n=8$.}
\label{fig:case_n=8}
\end{figure}

For $n\ge 9$, the picture stabilizes: the only yellow dots are at the four obstructions in $\mathcal{S}$, and their maximizers are also in $\mathcal{S}$ by Theorem~\ref{thm:Spoints}. Every other obstruction produces a blue, cyan, green, gray or red dot. The pictures for $5\le n\le 8$ show how this stable configuration is reached, with extra yellow dots near the diagonal $y=x$ at $n=5,6,7$ and the four additional locations $(2,3)$, $(3,2)$, $(5,6)$, $(6,5)$ at $n=8$. Figure~\ref{fig:n300} displays the distribution of maximizers for $n=300$.

\begin{figure}[ht]
\includegraphics[scale=0.28]{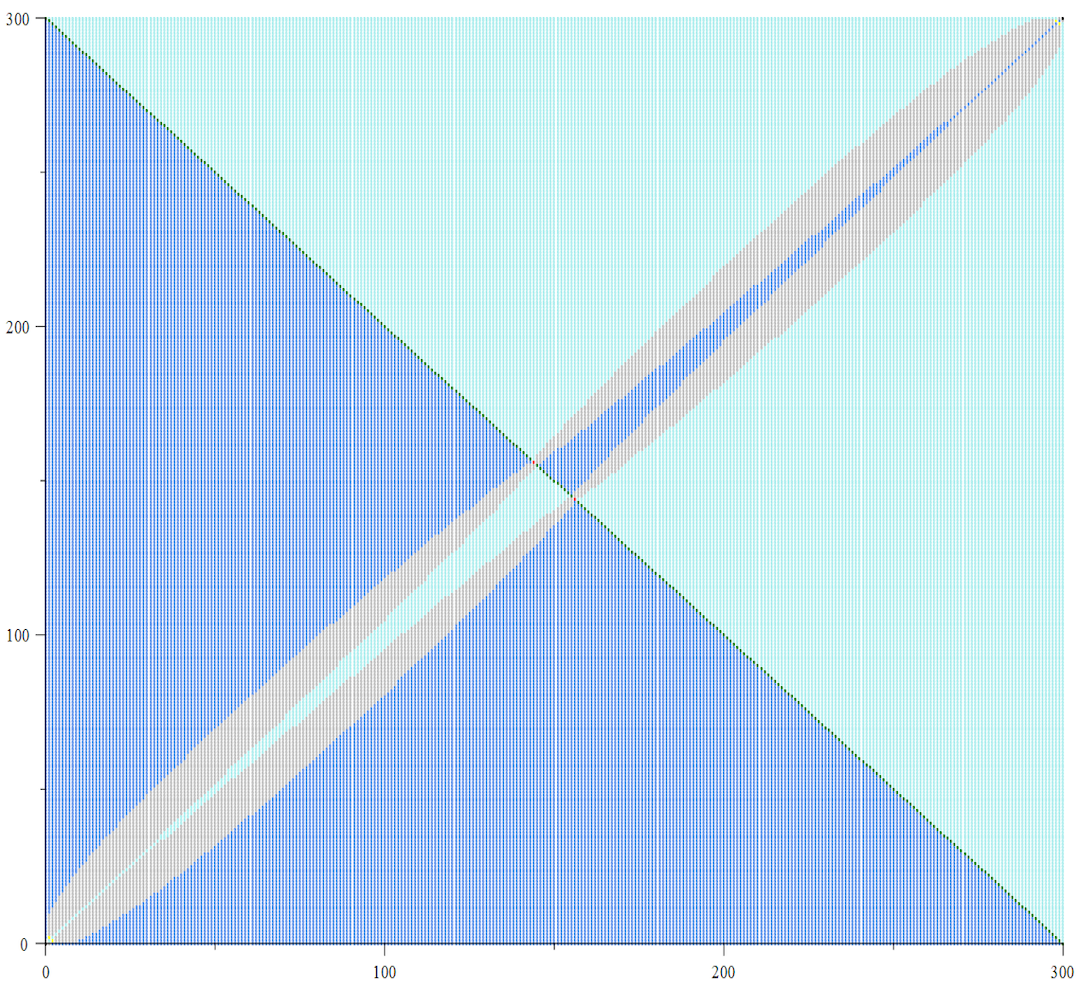}
\caption{Color-coded distribution of maximizers for $n=300$.}
\label{fig:n300}
\end{figure}

An interesting anomaly that caught our attention is that, along the antidiagonal $x+y=n$, the maximizer is \emph{not} always at $(1,1)$ and $(n-1,n-1)$, and the maximum migrates (irregularly) to a pair of boundary points of $\mathcal{C}$.
We discuss this in the next section.

\section{Antidiagonal anomaly and open problems}
\label{sec:antidiagonal}

Throughout this section $B$ denotes an obstruction on the antidiagonal $x+y=n$. 

\begin{proposition}\label{prop:antidiag-ties}
Let $n\ge 2$ and $B=(a,n-a)$ with $1\le a\le n-1$. Then
\[ f_B(x,y) = f_B(n-y,\,n-x) \;\text{ for every } (x,y)\in\Grid(n)\setminus\{(0,0),(n,n)\}. \]
In particular, $f_B(1,1)=f_B(n-1,n-1)$, $f_B(1,0)=f_B(n,n-1)$, and $f_B(0,1)=f_B(n-1,n)$. Moreover, $f_B(0,1)<f_B(1,0)$ if $2a<n$, and $f_B(1,0)<f_B(0,1)$ if $2a>n$.
\end{proposition}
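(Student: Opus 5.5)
The plan is to obtain the symmetry from a path-level bijection and then compute the last comparison directly. Consider the map $\rho(x,y)=(n-y,n-x)$, which is the composition of the two reflections in \eqref{eq:symmetry}: first $(x,y)\mapsto(y,x)$, then $(u,v)\mapsto(n-u,n-v)$. Applying both identities of \eqref{eq:symmetry} in turn gives $f_B(x,y)=f_{B^*}(n-y,n-x)$ with $B^*=(n-d,n-c)$ when $B=(c,d)$. For $B=(a,n-a)$ we get $B^*=(a,n-a)=B$, so $B$ is fixed by $\rho$ and the first claim follows at once. To be safe about the hypothesis $a\le c,\ b\le d$ used in deriving \eqref{eq:symmetry}, I will phrase the argument at the level of paths instead. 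Reversing a path and reading each $\E$ as $\N$ and vice versa (i.e.\ $p\mapsto\rho(p)$ traversed backwards) is a bijection of $\P_n$. It sends paths through $A$ to paths through $\rho(A)$, and paths avoiding $B$ to paths avoiding $\rho(B)=B$. This gives $f_B(A)=f_B(\rho(A))$ for every $A$ with no case restrictions.

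The particular ties then come from evaluating $\rho$: $\rho(1,1)=(n-1,n-1)$, $\rho(1,0)=(n,n-1)$, and $\rho(0,1)=(n-1,n)$.

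For the strict inequality, I would compute both quantities explicitly. We have $f(1,0)=f(0,1)=\binom{2n-1}{n-1}$, so
\[ f_B(1,0)-f_B(0,1)=f((0,1),B)-f((1,0),B). \]
Here $f((0,1),B)=\binom{n-1}{a}\binom{n}{a}$ and $f((1,0),B)=\binom{n-1}{a-1}\binom{n}{a}$, since from $B$ to $(n,n)$ there are $\binom{n}{n-a}$ paths. The difference therefore equals $\binom{n}{a}\bigl[\binom{n-1}{a}-\binom{n-1}{a-1}\bigr]$. By unimodality and symmetry of row $n-1$, or directly from the ratio $\binom{n-1}{a}/\binom{n-1}{a-1}=(n-a)/a$, this is positive exactly when $n-a>a$, i.e.\ $2a<n$, and negative when $2a>n$. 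The boundary cases $a=1$ and $a=n-1$ fit the same formula, with $\binom{n-1}{n-1}=1$ and so on.

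No step is a real obstacle. The only point needing care is justifying the symmetry without relying on the ordering hypothesis behind the displayed formula, and the path-reversal bijection handles that cleanly.
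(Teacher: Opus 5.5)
Your proposal is correct and follows the paper's proof. You obtain the symmetry by composing the two reflections in \eqref{eq:symmetry}; your path-reversal bijection is a tidy way to justify it without the ordering hypothesis behind the displayed formula. The strict inequality then reduces, after cancelling $f(1,0)=f(0,1)$, to comparing $\binom{n-1}{a-1}$ with $\binom{n-1}{a}$, exactly as in the paper.
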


\begin{proof}
Composing the two identities in \eqref{eq:symmetry} gives $f_B(A)=f_{\widetilde{B'}}(\widetilde{A'})$, and $(x,y)\mapsto(n-y,n-x)$ is the reflection about the antidiagonal $x+y=n$. Since $B$ lies on that line, we have $\widetilde{B'}=B$ and so $f_B(x,y) = f_B(n-y,\,n-x)$. For the inequalities, since $f(1,0)=f(0,1)$, we only need to compare $f\big((1,0),B\big)=\binom{n}{a}\binom{n-1}{a-1}$ and $f\big((0,1),B\big)=\binom{n}{a}\binom{n-1}{a}$. If $2a<n$, then $\binom{n-1}{a-1}<\binom{n-1}{a}$ and we get $f_B(1,0)>f_B(0,1)$. Similarly, if $2a>n$, then $f_B(1,0)<f_B(0,1)$.
\end{proof}

In other words, the six points of $\mathcal{C}$ collapse to three contenders. By symmetry, we may assume $2a<n$, reducing the maximizer debate to a comparison between $f_B(1,0)$ and $f_B(1,1)$.

\begin{proposition}\label{prop:antidiag-criterion}
Let $B=(a,n-a)$ with $1\le a\le n-1$. Then 
\[ f_B(1,0)-f_B(1,1) = G(n,a)-D(n), \] 
where $D(n) = \frac{1}{n}\binom{2n-2}{n-1}$ and $G(n,a) = \frac{n-2a+1}{n-a}\binom{n}{a}\binom{n-2}{a-1}$. 
\end{proposition}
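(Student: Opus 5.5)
The plan is a direct computation. We use $f_B(A)=f(A)-f(A,B)$ for $A=(1,0)$ and $A=(1,1)$. Both points are coordinatewise dominated by $B=(a,n-a)$ whenever $1\le a\le n-1$, so the closed formula of Section~2 applies. Subtracting gives
\[ f_B(1,0)-f_B(1,1) = \big[f(1,0)-f(1,1)\big] - \big[f((1,0),B)-f((1,1),B)\big]. \]
The plan is to show that the first bracket equals $-D(n)$ and that the second bracket equals $-G(n,a)$.

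For the unobstructed part, we have $f(1,0)=\binom{2n-1}{n-1}=\frac{2n-1}{n}\binom{2n-2}{n-1}$ and $f(1,1)=2\binom{2n-2}{n-1}$. Their difference is therefore $\big(\frac{2n-1}{n}-2\big)\binom{2n-2}{n-1}=-\frac1n\binom{2n-2}{n-1}=-D(n)$. This is exactly the margin $\frac{1}{2n-1}f(1,0)$ mentioned in the introduction.

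For the obstructed part, we count path segments.
\begin{itemize}
\item From $(1,0)$ to $B$ the path uses $a-1$ east steps and $n-a$ north steps, giving $\binom{n-1}{a-1}$ choices.
\item From $(0,0)$ to $(1,1)$ there are $2$ choices, and from $(1,1)$ to $B$ there are $\binom{n-2}{a-1}$.
\item From $B$ to $(n,n)$ there are $\binom{n}{a}$ choices in both cases.
\end{itemize}
Hence
\[ f((1,0),B)-f((1,1),B)=\binom{n}{a}\Big[\binom{n-1}{a-1}-2\binom{n-2}{a-1}\Big]. \]
Using $\binom{n-1}{a-1}=\frac{n-1}{n-a}\binom{n-2}{a-1}$, the bracket becomes $\binom{n-2}{a-1}\frac{(n-1)-2(n-a)}{n-a}=-\frac{n-2a+1}{n-a}\binom{n-2}{a-1}$. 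So the second difference is $-G(n,a)$, and substituting yields $f_B(1,0)-f_B(1,1)=G(n,a)-D(n)$.

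There is no real obstacle here. The only points needing care are the edge values $a=1$ and $a=n-1$, and the sign bookkeeping. The formula $f((1,1),B)=2\binom{n-2}{a-1}\binom{n}{a}$ needs $a\ge1$ and $n-a\ge1$, which are exactly the hypotheses. The factor $\frac{1}{n-a}$ is then well defined.
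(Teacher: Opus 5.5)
Your proposal is correct and is essentially the paper's own proof. Both split the difference into the unobstructed margin $f(1,1)-f(1,0)=D(n)$ and the obstructed part $f((1,1),B)-f((1,0),B)$, and both evaluate the latter with $\binom{n-1}{a-1}=\frac{n-1}{n-a}\binom{n-2}{a-1}$. You simply write the two brackets with the opposite sign.
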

\begin{proof}
By definition, $f_B(A)=f(A)-f(A,B)$, so
\[ f_B(1,0)-f_B(1,1) = \big[f\big((1,1),B\big)-f\big((1,0),B\big)\big] - \big[f(1,1)-f(1,0)\big]. \]
Since $f(1,1)=2\binom{2n-2}{n-1}$ and $f(1,0)=\binom{2n-1}{n-1}=\frac{2n-1}{n}\binom{2n-2}{n-1}$, the second bracket is precisely $D(n)$. Furthermore, $f\big((1,1),B\big)=2\binom{n}{a}\binom{n-2}{a-1}$ and $f\big((1,0),B\big)=\binom{n}{a}\binom{n-1}{a-1}$, so by $\binom{n-1}{a-1}=\frac{n-1}{n-a}\binom{n-2}{a-1}$, the first bracket equals $\big[2-\frac{n-1}{n-a}\big] \binom{n}{a}\binom{n-2}{a-1}=G(n,a)$.
\end{proof}

Writing 
\[ R_n(a)=\frac{G(n,a)}{D(n)}, \] 
we conclude that an obstruction $(a,n-a)$ with $2a<n$ satisfies $f_B(1,0)>f_B(1,1)$ precisely when $R_n(a)>1$. Setting $\rho(n)=\max\limits_{1\le a<n/2} R_n(a)$, and recalling that for $n\ge 9$ the maximum is attained on $\mathcal{C}$, we see that some antidiagonal obstruction has its maximum at the boundary points precisely when $\rho(n)>1$.

\begin{lemma}\label{lem:monotone}
For $2\le a<n/2$,
\[ \frac{R_n(a-1)}{R_n(a)} \;=\; \frac{a\,(a-1)\,(n-2a+3)}{(n-a+1)^2\,(n-2a+1)}, \]
and this quotient is less than $1$ whenever $n-2a\ge\sqrt{n}$. Consequently, if $a^*$ denotes the largest index with $n-2a^*\ge\sqrt{n}$, then 
\[ \rho(n) \;=\; \max_{a^*\le a<n/2} R_n(a). \]
\end{lemma}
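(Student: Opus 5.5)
Since $D(n)$ does not depend on $a$, the quotient $R_n(a-1)/R_n(a)$ equals $G(n,a-1)/G(n,a)$, and the plan is to compute this ratio directly. It has three factors. The linear prefactors give $\frac{n-2a+3}{n-a+1}\cdot\frac{n-a}{n-2a+1}$. The first binomial ratio is $\binom{n}{a-1}/\binom{n}{a}=\frac{a}{n-a+1}$. The second is $\binom{n-2}{a-2}/\binom{n-2}{a-1}=\frac{a-1}{n-a}$. Multiplying, the factor $n-a$ cancels and we get
\[
\frac{R_n(a-1)}{R_n(a)}=\frac{a(a-1)(n-2a+3)}{(n-a+1)^2(n-2a+1)},
\]
which is the stated identity. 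The hypothesis $2\le a<n/2$ makes every factor positive, so no sign issues arise.

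For the inequality, set $t=n-2a\ge 1$. We want $a(a-1)(t+3)<(n-a+1)^2(t+1)$. Since $n-a+1=a+t+1$, the bound $a(a-1)<(a+t+1)^2$ holds trivially, but it absorbs only a factor of size about $(t+3)/(t+1)$. So we compare more carefully, using $a(a-1)<a^2$. It then suffices to show
\[
a^2(t+3)\le (a+t+1)^2(t+1).
\]
Expand the right side as $a^2(t+1)+2a(t+1)^2+(t+1)^3$ and subtract $a^2(t+1)$ from both sides. The condition becomes $2a^2\le 2a(t+1)^2+(t+1)^3$, and it is enough that $a\le (t+1)^2$.

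Now use $a=(n-t)/2$. The condition reduces to $n-t\le 2(t+1)^2$. If $t\ge\sqrt n$, then $2(t+1)^2>2t^2\ge 2n>n-t$, so the inequality holds, and it is strict because $a(a-1)<a^2$. (It in fact holds under the weaker condition $t\gtrsim\sqrt{n/2}$, but the stated threshold is all we need.)

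For the consequence, note that $n-2a$ decreases as $a$ increases. So the set of $a$ with $n-2a\ge\sqrt n$ is an initial segment $1\le a\le a^*$. For every $2\le a\le a^*$ the quotient is below $1$, so $R_n(a-1)<R_n(a)$. Hence $R_n$ is strictly increasing on $[1,a^*]$, and the maximum over that segment is attained at $a^*$. Therefore $\rho(n)=\max_{a^*\le a<n/2}R_n(a)$. If no such $a^*\ge1$ exists, the statement is vacuous or trivial.

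The only delicate step is the inequality: choosing the crude bound $a(a-1)<a^2$ and checking that the leftover condition $a\le(t+1)^2$ follows from $t\ge\sqrt n$. Everything else is bookkeeping with binomial ratios.
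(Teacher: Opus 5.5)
Your proof is correct and follows essentially the same route as the paper. The ratio comes from the same binomial-coefficient computation. For the inequality, your $t+1$ is the paper's $u=n-2a+1$, and your key condition $a\le (t+1)^2$ together with the bound $a(a-1)<a^2$ is exactly the paper's step $u^2>n>a$; the monotonicity consequence is then drawn in the same way.
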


\begin{proof}
Since $D(n)$ does not depend on $a$, we have $R_n(a-1)/R_n(a)=G(n,a-1)/G(n,a)$. Using $\binom{n}{a-1}=\frac{a}{n-a+1}\binom{n}{a}$ and $\binom{n-2}{a-2}=\frac{a-1}{n-a}\binom{n-2}{a-1}$, the definition of $G$ gives
\[ \frac{G(n,a-1)}{G(n,a)}
   = \frac{n-2a+3}{n-2a+1}\cdot\frac{n-a}{n-a+1}\cdot\frac{a}{n-a+1}\cdot\frac{a-1}{n-a}
   = \frac{a\,(a-1)\,(n-2a+3)}{(n-a+1)^2\,(n-2a+1)}. \]
For the inequality, let $u=n-2a+1$ and observe that $R_n(a-1)/R_n(a)<1$ if and only if $(u+a)^2\,u>a(a-1)(u+2)$. Now, $n-2a\ge\sqrt n$ implies $u > \sqrt{n}+1$, and so $u^2 > n > a$. Therefore,
\[ (u+a)^2 u = u^3 + 2au^2 + a^2u > 2a^2 + a^2 u = a^2(u+2) > a(a-1)(u+2). \]
The last assertion follows, since $R_n(a-1)<R_n(a)$ for every $a\le a^*$.
\end{proof}

Evaluating this criterion in rational arithmetic gives the following. The inequality $\rho(n)>1$ holds for every $n$ with $5\le n\le 375$. For $n\le 7$, however, it does not yet produce a boundary maximizer: the global maximum for near-central antidiagonal obstructions is then attained at interior points, as recorded in Section~\ref{sec:comments}. The actual anomaly first appears at $n=8$, where the obstructions $(3,5)$ and $(5,3)$ have their maxima at $\{(1,0),(8,7)\}$ and $\{(0,1),(7,8)\}$, and it occurs for every $8\le n\le 375$. In the window $376\le n\le 495$, it occurs for $82$ of the $120$ values of $n$; the $38$ exceptions are:
\[ 376,\ 378, \qquad 423,\ 425,\dots,\ 463 \text{ (odd)},\qquad 466,\ 468, \dots,\ 494 \text{ (even)}. \]
It appears that the last value of $n$ for which the anomaly occurs is $n=495$. For $496\le n\le 2000$ we have verified that $\rho(n)<1$. 

\begin{conjecture}\label{conj:496}
For every $n\ge 496$ and every obstruction $B$ on the antidiagonal $x+y=n$, the maximum of $f_B$ is attained at $(1,1)$ and $(n-1,n-1)$. Equivalently, $\rho(n)<1$ for all $n\ge 496$.
\end{conjecture}

\subsection*{Open problems}

Several natural questions remain. Beyond Conjecture~\ref{conj:496}, the most intriguing to us is understanding the curved boundaries separating the colored regions in Figure~\ref{fig:n300}. 

Further natural extensions include obstructions on rectangular grids, and larger obstructions consisting of entire segments or more general convex regions. In each case, the basic question is the same: which points carry the maximum traffic?


\end{document}